\newcommand{\A}{\mathcal A}
\newcommand{\K}{\mathcal K}
\newcommand{\C}{\mathcal C}
\newcommand{\G}{\mathcal G}
\newcommand{\ind}{\mathrm{i} }
\newcommand{\tw}{\rm tw}
\newcommand{\sub}{\subseteq}
\newcommand{\comment}[1]{}
\newtheorem{theorem}{Theorem}
\newtheorem{lemma}[theorem]{Lemma}
\newtheorem{corollary}[theorem]{Corollary}
\newtheorem{proposition}[theorem]{Proposition}
\theoremstyle{definition}
\newtheorem{question}{Question}
\newtheorem{conjecture}{Conjecture}
\def\?#1{\vadjust{\vbox to 0pt{\vss\vskip-8pt\leftline{%
     \llap{\hbox{\vbox{\pretolerance=-1
     \doublehyphendemerits=0\finalhyphendemerits=0
     \hsize16truemm\tolerance=10000\small
     \lineskip=0pt\lineskiplimit=0pt
     \rightskip=0pt plus16truemm\baselineskip8pt\noindent
     \hskip0pt        
     #1\endgraf}\hskip7truemm}}}\vss}}}
     \title{A zero-sum problem on graphs}
     \author{Daniel Wei\ss{}auer}
     \date{}
\begin{document}
 		\maketitle
 		
 		\begin{abstract}
%
%
 			
 			Call a graph~$G$ \emph{zero-forcing} for a finite abelian group~$\G$ if for every $\ell : V(G) \to \G$ there is a connected $A \sub V(G)$ with $\sum_{a \in A} \ell(a) = 0$. The problem we pose here is to characterise the class of zero-forcing graphs. It is shown that a connected graph is zero-forcing for the cyclic group of prime order~$p$ if and only if it has at least~$p$ vertices. 

	When $|\G|$ is not prime, however, being zero-forcing is intimately linked to the structure of the graph. We obtain partial solutions for the general case, discuss computational issues and present several questions.

 		\end{abstract}
 		
 		\begin{section}{Introduction} \label{intro}

 		Let~$\A$ be a family of subsets of some finite set~$X$ and let $(\G, +, 0)$ be a finite abelian group. We say that~$\A$ has the \emph{zero-sum property for~$\G$} if for every map ${f :  X \to \G}$ there is an $A \in \A$ with $\sum_{a \in A} f(a) = 0$. Zero-sum properties of various families~$\A$ have been studied extensively in the literature, see the surveys by Caro~\cite{CaroSurvey} and by Gao and Geroldinger~\cite{GeroldingerSurvey}. The classical Erd\"{o}s-Ginzburg-Ziv Theorem~\cite{EGZ} asserts that the family of $n$-element subsets of $\{ 1, \ldots, 2n-1 \}$ has the zero-sum property for~$\mathbb{Z}_n$, see also~\cite{AlonDubiner}. Bialostocki and Dierker proved in~\cite{BialoDiStar} that when~$X$ is the set of edges of a complete graph on $2m-1$ vertices, the family of all stars with~$m$ edges has the zero-sum property for~$\mathbb{Z}_m$ if~$m$ is even. F{\"u}redi and Kleitman showed in~\cite{FurediZeroTrees} that when~$X$ is the edge-set of a complete graph on $n+1$ vertices, the family of all spanning trees has the zero-sum property for any abelian group of order~$n$. Their result was later generalized to hypergraphs by Schrijver and Seymour~\cite{SeymourZeroTrees}, with a simplified proof.
 		
 		In the perhaps most fundamental zero-sum problem, one seeks to determine the minimum integer~$n$ such that the family of all non-empty subsets of $\{ 1, \ldots , n \}$ has the zero-sum property for~$\G$. This number is the \emph{Davenport constant} $D(\G)$ of~$\G$ and its determination is a difficult open problem (see \cite{GeroldingerSurvey}). There is a very simple folklore proof that $D(\G) \leq | \G|$: Given  $a_1, \ldots, a_{|\G|} \in \G$, consider $s_k := a_1 + \ldots + a_k$ for $1 \leq k \leq |\G|$. If one of these is zero, we are done. Otherwise, by the pigeonhole principle, two of them must be equal, say $s_k = s_m$ for $k < m$. But then $a_{k+1} + \ldots + a_m = 0$. 
 		
 		The observation that initiated the current project is that this argument does not only demonstrate the existence of \emph{any} subset summing up to zero, but even one of \emph{consecutive} indices. Put differently, the family of connected subgraphs of a path on~$|\G|$ vertices has the zero-sum property for~$\G$. We thus say that this graph is \emph{zero-forcing} for~$\G$. The question we pose and address here is which graphs other than a sufficiently long path are zero-forcing for a given finite abelian group~$\G$. 
 		
 		Precise definitions as well as some first observations and remarks will be given in Section~\ref{prelims}. We then study the special case where~$\G$ is cyclic of prime order. In that case, the above observation about the path extends in the strongest possible way.
 		
 	\begin{theorem} \label{prime trivial}
	Let~$p$ be a prime number and~$G$ a connected graph. Then~$G$ is zero-forcing for~$\mathbb{Z}_p$ if and only if~$G$ has at least~$p$ vertices.
	\end{theorem}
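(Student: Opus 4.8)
The proof splits into two directions of quite different flavour, and I would treat them separately.

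\emph{Necessity} is immediate: if $G$ has at most $p-1$ vertices, set $\ell \equiv 1$. Every non-empty connected $A \sub V(G)$ then has $\sum_{a \in A} \ell(a) = |A| \in \{1, \dots, p-1\}$, which is never $0$ in $\mathbb{Z}_p$, so $G$ is not zero-forcing.

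For \emph{sufficiency}, suppose $G$ is connected with $n \ge p$ vertices and fix $\ell : V(G) \to \mathbb{Z}_p$. First I would reduce to trees: choose a spanning tree $T$ and root it at an arbitrary vertex $r$; a subtree of $T$ is connected in $G$, so it suffices to produce a subtree with zero sum. For a vertex $v$, let $T_v$ be the subtree rooted at $v$ and let $S_v \sub \mathbb{Z}_p$ be the set of all values $\sum_{a \in A}\ell(a)$ as $A$ ranges over subtrees of $T_v$ containing $v$. The key structural point is that every non-empty connected subset of $T$ has a unique vertex nearest the root and is thus a subtree of some $T_v$ containing $v$; hence a zero-sum subtree exists if and only if $0 \in S_v$ for some $v$. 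These sets satisfy an obvious recursion: $S_v = \{\ell(v)\}$ when $v$ is a leaf, and $S_v = \ell(v) + \sum_{i}\bigl(S_{c_i} \cup \{0\}\bigr)$ (a translate of an iterated sumset) when $v$ has children $c_1, \dots, c_d$, since a subtree of $T_v$ through $v$ amounts to choosing, independently for each child, either nothing or a subtree of $T_{c_i}$ through $c_i$.

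Now I would argue by contradiction: assume no connected set has zero sum, so $0 \notin S_v$ for all $v$, and prove $|S_v| \ge \min(p, |T_v|)$ by induction on $|T_v|$. The leaf case is trivial. In the inductive step, the hypothesis together with $0 \notin S_{c_i}$ gives $|S_{c_i} \cup \{0\}| \ge \min(p,|T_{c_i}|) + 1$; since the left-hand side is at most $p$ this already forces $|T_{c_i}| < p$, hence $|S_{c_i} \cup \{0\}| \ge |T_{c_i}| + 1$. Applying the Cauchy--Davenport inequality to the sumset $\sum_i (S_{c_i}\cup\{0\})$ and noting that translation preserves cardinality yields $|S_v| \ge \min\bigl(p,\ \sum_i |T_{c_i}| + 1\bigr) = \min(p, |T_v|)$, which closes the induction. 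Taking $v = r$ gives $|S_r| \ge \min(p, n) = p$, so $S_r = \mathbb{Z}_p \ni 0$, contradicting $0 \notin S_r$. Therefore some connected set has zero sum, and $G$ is zero-forcing for $\mathbb{Z}_p$.

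The main obstacle, to my mind, is realising that the naive approach — take an ordering of $V(G)$ whose prefixes are all connected and use pigeonhole on partial sums — fails, because an \emph{interval} of such an ordering need not induce a connected subgraph; the remedy is to replace the linear structure by the branching of a spanning tree and to convert that branching into size growth via an additive-combinatorics bound. Once the recursion for $S_v$ is set up correctly, the Cauchy--Davenport estimate is routine; the only other delicate point is the bookkeeping that "$0 \notin S_v$ for every $v$'' is genuinely equivalent to "$G$ has no connected zero-sum set'', which rests on the unique-nearest-vertex observation above.
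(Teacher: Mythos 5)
Your proof is correct and follows essentially the same route as the paper: the paper also reduces to a spanning tree and proves by induction, via the recursion $Q(t,T) = \{\ell(t)\} + \sum_j (Q(t_j,T_j) \cup \{0\})$ and iterated Cauchy--Davenport, that a zero-avoiding tree satisfies $|Q(t,T)| \geq |T|$. Your $S_v$ is exactly the paper's $Q(v,T_v)$ and your $\min(p,\cdot)$ bookkeeping is just a slightly more careful phrasing of the same estimate.
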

	
	This will be proved in Section~\ref{prime order}. When the order of~$\G$ is not prime, however, the situation appears to be more difficult and a complete characterisation of zero-forcing graphs stays out of our reach. In Section~\ref{general} we give a necessary condition for a graph to be zero-forcing, relating separation-properties of the graph to the partial order of subgroups of~$\G$, as well as some sufficient conditions for a graph to be zero-forcing. Issues related to computability and algorithms will be discussed in Section~\ref{logic}. In Section~\ref{reconstruct} we change the point of view and try to recover properties of~$\G$ from its zero-forcing graphs. Open questions and conjectures are spread throughout the text.
 		
 		\end{section}

	\begin{section}{Preliminaries} \label{prelims}
		Throughout, the letter~$G$ denotes a finite, simple and undirected graph ${G = (V,E)}$ and~$\G$ a finite abelian group with operation~$+$ and identity~0. We denote by~$\mathbb{Z}_n$ the cyclic group of integers modulo~$n$. We often do not distinguish between a set of vertices and the subgraph it induces in~$G$. In particular, we call $A \sub V(G)$ connected if and only if $G[A]$ is. For a natural number~$n$, we write $[n] := \{ 1, \ldots, n \}$.
		
		A \emph{$\G$-labeling} of~$G$ is a map $\ell : V(G) \to \G$. We call the pair $(G, \ell)$ a \emph{labeled graph}, the group~$\G$ being implicit. The labeling~$\ell$ extends to sets of vertices as $\ell(A) := \sum_{x \in A} \ell(x)$ for $A \sub V(G)$. We call $(G, \ell)$ \emph{zero-avoiding} if $\ell(A) \neq 0$ for every non-empty connected $A \sub V(G)$. We call~$G$ \emph{zero-forcing for~$\G$} if there is no $\G$-labeling~$\ell$ of~$G$ such that $(G, \ell)$ is zero-avoiding. We have the following monotonicity-property for zero-forcing graphs.
		
		\begin{proposition} \label{minorclosed}
	Let $G, H$ be two graphs. If~$H$ is a minor of~$G$ and~$H$ is zero-forcing for~$\G$, then so is~$G$.
\end{proposition}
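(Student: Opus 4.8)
The plan is to peel the minor relation into its three elementary moves. Recall that $H$ is a minor of $G$ if and only if $H$ can be obtained from $G$ by a finite sequence of vertex deletions, edge deletions and edge contractions. By induction on the length of such a sequence it therefore suffices to prove the one-step statement: \emph{if $H$ arises from $G$ by deleting a vertex, deleting an edge, or contracting an edge, and $H$ is zero-forcing for~$\G$, then $G$ is zero-forcing for~$\G$.} To verify this I would argue directly: given an arbitrary labeling $\ell : V(G) \to \G$, I want a non-empty connected $A \sub V(G)$ with $\ell(A) = 0$. The idea is to transport $\ell$ to a suitable labeling of the smaller graph $H$, apply the hypothesis that $H$ is zero-forcing, and then lift the resulting zero-sum set back up to $G$.

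In the two deletion cases the lifting is immediate, the key point being that deleting vertices or edges can only destroy connectedness, so a set connected in $H$ is automatically connected in $G$. Concretely, if $H = G - v$ one restricts $\ell$ to $V(H)$ and uses that $H = G[V(G) \setminus \{v\}]$ is induced; if $H = G - e$ the labeling $\ell$ already lives on $V(H) = V(G)$ and one uses that re-adding $e$ preserves connectedness. The contraction case $H = G/e$, with $e = xy$ and $v_e$ the vertex of $H$ replacing $\{x,y\}$, needs one extra idea: set $\ell'(v_e) := \ell(x) + \ell(y)$ and $\ell'(u) := \ell(u)$ for all other $u$, and let $A$ be a non-empty connected subset of $H$ with $\ell'(A) = 0$. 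If $v_e \notin A$ then $H[A] = G[A]$ and $\ell(A) = \ell'(A) = 0$. If $v_e \in A$, put $A' := (A \setminus \{v_e\}) \cup \{x, y\}$; then $\ell(A') = \ell'(A) = 0$ by the choice of $\ell'$, and $A'$ is connected in $G$ because $xy$ is an edge of $G$ and every neighbour of $v_e$ in $H[A]$ is a neighbour of $x$ or of $y$ in $G$.

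Beyond the routine book-keeping, the only place requiring a moment's care is this last point — checking that ``un-contracting'' the edge turns the connected set $A$ of $H$ back into a connected set $A'$ of $G$ — so that is where I would be most careful in the write-up. One further caveat worth a sentence is that contracting an edge may create parallel edges, but since we work with simple graphs these are simply discarded and connectedness is unaffected, so the argument goes through unchanged.
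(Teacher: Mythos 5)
Your proof is correct, but it takes a different route from the paper. The paper uses the branch-set (model) definition of a minor in one shot: it fixes disjoint connected sets $B_v \sub V(G)$, one for each $v \in V(H)$, with an edge of $G$ between $B_v$ and $B_w$ whenever $vw \in E(H)$, pushes the labeling down via $\overline{\ell}(v) := \ell(B_v)$, and lifts the resulting zero-sum connected set $A \sub V(H)$ to $Q := \bigcup_{a \in A} B_a$, which is connected because each $B_a$ is connected and adjacent branch sets are joined by edges. You instead decompose the minor relation into its three elementary operations and induct on the length of the sequence. Both arguments are sound; your contraction case is essentially the paper's argument specialised to the model in which every branch set is a singleton except one of size two, and your two deletion cases are the observation that zero-forcing is preserved under taking supergraphs on a (super)set of vertices. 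What the paper's approach buys is brevity and the absence of any induction or case analysis; what yours buys is that it only needs the operational definition of a minor and makes the one genuinely non-trivial step (un-contracting an edge preserves connectedness of the lifted set) completely explicit. Your handling of that step, and of the parallel-edge caveat for simple graphs, is correct.
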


	\begin{proof}
	Let $\ell: V(G) \to \G$ be an arbitrary $\G$-labeling of~$G$. Since~$H$ is a minor of~$G$, there is a family $(B_v \colon v \in V(H) )$ of disjoint connected $B_v \sub V(G)$ such that~$G$ contains an edge between~$B_v$ and~$B_w$ whenever $vw \in E(H)$. Define $\overline{\ell} : V(H) \to \G$ as $\overline{\ell}(v) := \ell(B_v)$. Since~$H$ is zero-forcing for~$\G$, there is a non-empty connected $A \sub V(H)$ with $\overline{\ell}(A) = 0$. But then $Q := \bigcup_{a \in A} B_a \sub V(G)$ is non-empty, connected and satisfies $\ell(Q) = 0$.
	\end{proof}
	
	By the seminal Graph Minor Theorem by Robertson and Seymour~\cite{GMT}, it follows that for every~$\G$ there is a finite set $Z(\G)$ of graphs such that a graph is zero-forcing for~$\G$ if and only if it contains some graph in $Z(\G)$ as a minor. However, it is not necessary to invoke this big machinery. We can even obtain a stronger statement from the following easy observation which initiated the current project.

	\begin{proposition} \label{paths rule}
	The path on~$n$ vertices is zero-forcing for~$\G$ iff $n \geq |\G|$.
	\end{proposition}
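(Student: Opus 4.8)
The plan is to prove both implications directly, exploiting the fact that the non-empty connected subsets of the path $P_n$ on vertices $v_1, \ldots, v_n$ are precisely the intervals $I_{i,j} := \{ v_i, v_{i+1}, \ldots, v_j \}$ with $1 \leq i \leq j \leq n$. Given a $\G$-labeling $\ell$, I would introduce the partial sums $s_0 := 0$ and $s_k := \ell(v_1) + \cdots + \ell(v_k)$ for $1 \leq k \leq n$, so that $\ell(I_{i,j}) = s_j - s_{i-1}$. The pivotal observation is that $(P_n, \ell)$ is zero-avoiding if and only if the $n+1$ group elements $s_0, s_1, \ldots, s_n$ are pairwise distinct: the equality $\ell(I_{i,j}) = 0$ is equivalent to $s_j = s_{i-1}$, and since $i \leq j$ the indices $i-1$ and $j$ are always distinct.

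For the forward direction, suppose $n \geq |\G|$. Then $s_0, s_1, \ldots, s_n$ are $n+1 > |\G|$ elements of~$\G$, so by the pigeonhole principle two of them coincide; by the observation above, no zero-avoiding $\G$-labeling of~$P_n$ exists, i.e.\ $P_n$ is zero-forcing for~$\G$. This is exactly the folklore argument for the bound $D(\G) \leq |\G|$ recalled in the introduction, refined by the remark that the zero-sum set produced is an interval. (Alternatively, one could prove only that $P_{|\G|}$ is zero-forcing and then invoke Proposition~\ref{minorclosed}, since $P_{|\G|}$ is a minor of every longer path.)

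For the converse, suppose $n < |\G|$, so $|\G| \geq n+1$. Then~$\G$ has at least~$n$ non-zero elements, and we may choose distinct non-zero $t_1, \ldots, t_n \in \G$. Setting $\ell(v_1) := t_1$ and $\ell(v_k) := t_k - t_{k-1}$ for $2 \leq k \leq n$ yields $s_k = t_k$ for all $k \geq 1$, so $s_0 = 0, s_1, \ldots, s_n$ are pairwise distinct; hence $(P_n, \ell)$ is zero-avoiding and $P_n$ is not zero-forcing for~$\G$.

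There is no genuine obstacle in this argument; the only point demanding a little care is the dictionary between connected subsets of a path and intervals of indices, together with the bookkeeping that turns ``no interval sums to zero'' into ``the partial sums are pairwise distinct''. Once that correspondence is set up, both directions reduce to one-line counting arguments, and the sharp threshold $n = |\G|$ falls out of the fact that there are exactly $n+1$ partial sums $s_0, \ldots, s_n$ to place into~$\G$.
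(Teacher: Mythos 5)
Your proof is correct and follows essentially the same route as the paper: the same partial-sum/pigeonhole argument for $n \geq |\G|$ (with the cosmetic improvement of including $s_0 = 0$ to unify the two cases) and the same telescoping construction of a zero-avoiding labeling for $n < |\G|$.
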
 		
	
	\begin{proof}
	Let~$P$ be the graph with $V(P) = [n]$ and $xy \in E(P)$ iff $|x - y| = 1$.
	
	Suppose first that $n \geq |\G|$ and let $\ell : [n] \to \G$ be an arbitrary $\G$-labeling of~$P$. For $k \in [n]$ let $s_k := \ell([k])$. If one of these is equal to zero, we are done. Otherwise, by the pigeon-hole principle, two of them must be equal, say $s_k = s_m$ for $k < m$. But then $A = \{ k+1, \ldots, m \}$ is connected and $\ell(A) = 0$.
	
	Suppose now that $n < |\G|$. Let $\G = \{ g_1, g_2, \ldots \}$ be an enumeration of~$\G$. Define $\ell(j) := g_{j+1} - g_j$ for $j \in [n]$. Then for any connected $A \sub [n]$, say $A = \{k, \ldots , k+j\}$, we have $\ell(A) = g_{k+j+1} - g_k \neq 0$, so $(P, \ell)$ is zero-avoiding.
	\end{proof}
	
	Ding proved in~\cite{Ding} that for every~$k$ the class of graphs excluding the path of length~$k$ is well-quasi-ordered by the induced subgraph relation. We thus have the following corollary.
	
%
%
		\begin{corollary} \label{finite obstruction set}
	There is a finite set~$Z^{\ind}(\G)$ of graphs such that a graph is zero-forcing for~$\G$ if and only if it contains a graph in~$Z^{\ind}(\G)$ as an induced subgraph.
	\end{corollary}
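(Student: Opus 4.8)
The plan is to take $Z^{\ind}(\G)$ to be the set of graphs that are minimal, with respect to the induced subgraph relation, among all graphs zero-forcing for~$\G$. An induced subgraph is a minor, so Proposition~\ref{minorclosed} shows that the class of zero-forcing graphs is closed under taking induced supergraphs. Since a proper induced subgraph has strictly fewer vertices, the induced subgraph relation is well-founded on finite graphs, so every zero-forcing graph contains a minimal one as an induced subgraph; together with the upward closure just noted, this yields that a graph is zero-forcing for~$\G$ if and only if it contains a member of $Z^{\ind}(\G)$ as an induced subgraph. The only nontrivial point is that $Z^{\ind}(\G)$ is finite.

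To see this, consider a minimal zero-forcing graph~$G$ and distinguish two cases. If $G$ contains the path~$P$ on $|\G|$ vertices as a (not necessarily induced) subgraph, say with vertex set $S \subseteq V(G)$, then $G[S]$ has $P$ as a minor and is therefore zero-forcing by Propositions~\ref{minorclosed} and~\ref{paths rule}; as $G[S]$ is an induced subgraph of~$G$, minimality of~$G$ forces $S = V(G)$, so $G$ has exactly $|\G|$ vertices, and there are only finitely many graphs on $|\G|$ vertices. If instead $G$ does not contain $P$ as a subgraph, then it contains no induced copy of $P$ either, so $G$ belongs to the class~$\C$ of graphs with no induced path of length $|\G|-1$. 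Distinct minimal zero-forcing graphs are incomparable in the induced subgraph order, so those falling into this second case form an antichain in~$\C$; by Ding's theorem~\cite{Ding} the class~$\C$ is well-quasi-ordered by the induced subgraph relation and hence has no infinite antichain. Combining the two cases bounds the number of minimal zero-forcing graphs, so $Z^{\ind}(\G)$ is finite.

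The step that actually requires an idea is this case distinction. Because a graph can fail to be zero-forcing for~$\G$ no matter how large it is, one cannot simply bound the order of a minimal zero-forcing graph; the point of splitting off the graphs containing a long path is that these are zero-forcing for a trivial reason and can be controlled directly, after which Ding's well-quasi-ordering handles every remaining minimal example.
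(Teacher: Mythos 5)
Your construction of $Z^{\ind}(\G)$ and the overall strategy (upward closure via Proposition~\ref{minorclosed}, well-foundedness, and Ding's theorem to bound the antichain of minimal examples) is essentially the paper's proof; your case split on whether a minimal graph contains $P_{|\G|}$ as a subgraph is only a cosmetic variation of the paper's observation that minimal graphs contain no path on $|\G|+1$ vertices. One step is misstated, though: in the second case you pass from ``$G$ has no $P_{|\G|}$ subgraph'' to ``$G$ has no induced $P_{|\G|}$'' and then apply Ding's theorem to the class of graphs with no \emph{induced} long path. That class is genuinely not well-quasi-ordered by the induced subgraph relation once $|\G|\geq 5$: the complements $\overline{C_n}$ of cycles contain no induced path on five vertices, yet form an infinite antichain under induced subgraphs. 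Ding's theorem concerns graphs excluding the path as a \emph{subgraph}, and you have weakened your hypothesis in exactly the wrong direction. The proof survives only because your case hypothesis already places $G$ in the smaller, subgraph-excluding class to which Ding's theorem actually applies; cite it for that class and the argument is correct.
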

	
	\begin{proof}
	Let~$Z^{\ind}(\G)$ be the set of all graphs which are zero-forcing for~$\G$, but for which no proper induced subgraph is. By Proposition~\ref{paths rule}, none of them contains a path on $|\G| +1$ vertices. It thus follows from Ding's theorem that~$Z^{\ind}(\G)$ must be finite.
	\end{proof}
	
	Taking $Z(\G)$ as the set of minor-minimal graphs in~$Z^{\ind}(\G)$, we obtain a set of graphs as guaranteed by the Graph Minor Theorem through the by far easier result of Ding. This gives, in principle, a combinatorial characterisation of zero-forcing graphs, but the proof is non-constructive. Related issues of logic and computability will be discussed in Section~\ref{logic}.
	
 		\end{section}

 		\begin{section}{Cyclic groups of prime order} \label{prime order}
 		
 		In this section we prove Theorem~\ref{prime trivial}. The condition $|G| \geq p$ is clearly necessary: Otherwise just assign the label~1 to every vertex. Our proof of the converse uses a fundamental result of Additive Combinatorics, the \emph{Cauchy-Davenport Inequality}. For non-empty $A, B \sub \mathbb{Z}_p$ we define
	\[
	A + B := \{ a + b \, \mid \, a \in A, b \in B \},  \, \, -A := \{ -a \, \mid \, a \in A \} .
	\]
	
	\begin{theorem}[Cauchy-Davenport \cite{cauchy1813,davenport1935}] 
		Let~$p$ be a prime number and $A, B \sub \mathbb{Z}_p$ non-empty. If $A + B \neq \mathbb{Z}_p$, then $|A + B| \geq |A| + |B| - 1$.
	\end{theorem}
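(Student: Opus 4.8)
The plan is to prove the inequality by induction on $|B|$, using the Dyson $e$-transform to shrink $B$ while keeping control of the sumset. Throughout I assume $A + B \neq \mathbb{Z}_p$, which is the hypothesis and which I must preserve as I pass to smaller instances. The base case $|B| = 1$ is immediate: $A + B$ is then a translate of $A$, so $|A+B| = |A| = |A| + |B| - 1$.

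For the inductive step, suppose $|B| \geq 2$. For a parameter $e \in \mathbb{Z}_p$ I would form $A' := A \cup (e + B)$ and $B' := B \cap (A - e)$. Two identities drive the argument. First, since $b \in B'$ precisely when $b \in B$ and $b + e \in A$, translation by~$e$ is a bijection between $B'$ and $(e+B) \cap A$; combined with inclusion--exclusion for $A \cup (e+B)$ and the fact that $|e+B| = |B|$, this gives $|A'| + |B'| = |A| + |B|$. Second, a short case distinction, according to whether a chosen element of $A'$ lies in $A$ or in $e+B$, shows $A' + B' \subseteq A + B$; hence $|A'+B'| \leq |A+B|$ and, crucially, $A' + B' \neq \mathbb{Z}_p$, so the inductive hypothesis will be applicable to $(A', B')$.

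The crux is to choose $e$ so that $B'$ is a \emph{nonempty proper} subset of $B$, and I claim this is possible unless $A + B = \mathbb{Z}_p$. Note that $B'$ is nonempty exactly when $(e+B) \cap A \neq \emptyset$, and $B' = B$ exactly when $e + B \subseteq A$. Thus if no suitable $e$ exists, then every translate of $B$ is either disjoint from $A$ or wholly contained in $A$. This dichotomy is where primality enters. Fix distinct $b_1, b_2 \in B$ and set $d := b_2 - b_1 \neq 0$. For any $a \in A$, taking $e := a - b_1$ makes $e + b_1 = a$ lie in $(e+B) \cap A$, so by the dichotomy $e + B \subseteq A$; in particular $a + d = e + b_2 \in A$. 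Hence $A$ is closed under adding $d$, and since $d \neq 0$ generates $\mathbb{Z}_p$ (here $p$ prime is used, and used only here), iterating forces $A = \mathbb{Z}_p$ and therefore $A + B = \mathbb{Z}_p$, contradicting the hypothesis.

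Having secured $1 \leq |B'| < |B|$ with $A', B'$ nonempty and $A' + B' \neq \mathbb{Z}_p$, the inductive hypothesis yields $|A'+B'| \geq |A'| + |B'| - 1$, whence
\[
|A + B| \;\geq\; |A' + B'| \;\geq\; |A'| + |B'| - 1 \;=\; |A| + |B| - 1,
\]
which closes the induction. The routine parts are the two transform identities; the one step requiring genuine care is the structural dichotomy and its consequence $A = \mathbb{Z}_p$, as this is the sole place the primality of~$p$ is invoked. (A completely different proof runs through the polynomial method and the Combinatorial Nullstellensatz, but the $e$-transform has the advantage of being elementary and self-contained.)
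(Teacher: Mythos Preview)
Your argument via the Dyson $e$-transform is correct. The base case, the two transform identities $|A'|+|B'|=|A|+|B|$ and $A'+B'\subseteq A+B$, and the dichotomy argument showing that some $e$ makes $B'$ a nonempty proper subset of $B$ are all sound; primality is invoked exactly where it must be, to force $A=\mathbb{Z}_p$ from closure under a nonzero shift.

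The paper, however, does not prove Cauchy--Davenport directly: it quotes the inequality as a classical result (with citations) and uses it as a tool in the proof of Lemma~\ref{tree prime} and hence Theorem~\ref{prime trivial}. What the paper \emph{does} contain is a short remark, after Theorem~\ref{prime trivial} is established, showing that Cauchy--Davenport can in turn be \emph{deduced} from Theorem~\ref{prime trivial}: given $A,B\subseteq\mathbb{Z}_p$ with $A+B\neq\mathbb{Z}_p$ and $C:=\mathbb{Z}_p\setminus -(A+B)$, one builds a tree consisting of three paths of lengths $|A|-1$, $|B|-1$, $|C|-1$ glued at a common vertex, labels it by successive differences so that the resulting labeled tree is zero-avoiding, and reads off $|A|+|B|+|C|-2<p$ from the order bound. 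So the paper's route is graph-theoretic and goes through its own main theorem, while yours is the self-contained additive-combinatorial argument; each establishes the same inequality, but the paper's deduction is offered to exhibit the equivalence of Theorem~\ref{prime trivial} with Cauchy--Davenport rather than as an independent proof (indeed, as written it would be circular).
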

	
	For non-empty $A_1, \ldots, A_m \sub \mathbb{Z}_p$, applying this inequality repeatedly shows that either $\sum_{j \in [m]} A_j = \mathbb{Z}_p$ or 
	\[
	\left| \sum_{j \in [m]} A_j \right| \geq 1 + \sum_{j \in [m]} (|A_j| - 1) .
	\]
	
	The statement of Theorem~\ref{prime trivial} is not prone to induction, so instead we prove something seemingly stronger. For a labeled graph $(G, \ell)$ and $v \in V(G)$ define $Q(v,G) := \{ \ell(A) \colon v \in A \sub G \text{ connected} \}$, the set of all values that can be ``generated'' at~$v$.

	\begin{lemma} \label{tree prime}
		Let~$T$ be a tree, $\ell : V(T) \to \mathbb{Z}_p$. If $(T, \ell)$ is zero-avoiding, then $|Q(t,T)| \geq |T|$ for every $t \in T$.
	\end{lemma}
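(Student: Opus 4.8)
The natural approach is induction on $|T|$, rooting the tree at the vertex $t$ in question. The base case $|T| = 1$ is immediate: if $(T,\ell)$ is zero-avoiding then $\ell(t) \neq 0$, so $Q(t,T) = \{\ell(t)\}$ has size $1 = |T|$. For the inductive step, let $t_1, \dots, t_k$ be the neighbours of $t$ in $T$, and let $T_1, \dots, T_k$ be the corresponding subtrees (so $T_i$ is the component of $T - t$ containing $t_i$, and $V(T) = \{t\} \cup \bigcup_i V(T_i)$). Each $(T_i, \ell\restriction T_i)$ is again zero-avoiding, so by induction $|Q(t_i, T_i)| \geq |T_i|$.

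The key observation is that every connected set $A \ni t$ decomposes as $A = \{t\} \cup \bigcup_{i \in S} A_i$ where $S \subseteq [k]$ and each $A_i$ is a connected subset of $T_i$ containing $t_i$; conversely any such choice gives a connected set containing $t$. Hence
\[
Q(t,T) \;=\; \Big\{\, \ell(t) + \textstyle\sum_{i \in S}\! q_i \;:\; S \subseteq [k],\ q_i \in Q(t_i,T_i) \,\Big\}.
\]
In particular, writing $B_i := \{0\} \cup Q(t_i, T_i)$ (the ``$0$'' accounting for the option of not using subtree $T_i$ at all), we get $Q(t,T) = \ell(t) + \sum_{i \in [k]} B_i$, a sumset in $\mathbb{Z}_p$. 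Now I want to apply the iterated Cauchy–Davenport inequality to the sets $B_i$. Each $B_i$ is non-empty with $|B_i| \geq |Q(t_i,T_i)| + 1 \geq |T_i| + 1$ — here I must use that $0 \notin Q(t_i,T_i)$, which is exactly the zero-avoiding hypothesis applied within $T_i$ (a connected set summing to zero is forbidden). If $\sum_i B_i = \mathbb{Z}_p$, then $|Q(t,T)| = p \geq |T|$ and we are done. Otherwise Cauchy–Davenport gives
\[
|Q(t,T)| \;=\; \Big|\sum_{i \in [k]} B_i\Big| \;\geq\; 1 + \sum_{i \in [k]} (|B_i| - 1) \;\geq\; 1 + \sum_{i \in [k]} |T_i| \;=\; |T|,
\]
since $1 + \sum_i |T_i| = |T|$. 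This closes the induction.

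The main obstacle I anticipate is being careful about the degenerate cases in the sumset manipulation: when $k = 0$ (i.e.\ $t$ is isolated, handled in the base case) and, more subtly, making sure the ``$+0$'' trick is legitimate — one needs $0 \notin Q(t_i, T_i)$ so that passing from $Q(t_i,T_i)$ to $B_i = \{0\} \cup Q(t_i,T_i)$ genuinely increases the size by exactly one, and one needs each $B_i$ non-empty, which holds since $Q(t_i,T_i) \ni \ell(t_i)$. A secondary point to verify is that the iterated form of Cauchy–Davenport quoted in the excerpt applies to non-empty subsets of $\mathbb{Z}_p$ with no further restriction, which it does. Everything else is bookkeeping about how connected subtrees through $t$ decompose over the branches at $t$.
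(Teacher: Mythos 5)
Your proof is correct and essentially identical to the paper's: the same induction on $|T|$, the same decomposition $Q(t,T) = \{\ell(t)\} + \sum_{i}\bigl(Q(t_i,T_i) \cup \{0\}\bigr)$ over the branches at $t$, and the same application of the iterated Cauchy--Davenport inequality using $0 \notin Q(t_i,T_i)$. The only tiny wrinkle is your handling of the case $\sum_i B_i = \mathbb{Z}_p$: rather than asserting $p \geq |T|$ (which is not yet available at that point in the induction), observe that this case is vacuous, since it would place $0$ in $Q(t,T)$ and hence contradict the zero-avoiding hypothesis.
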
	

	This certainly implies Theorem~\ref{prime trivial}: Just take a spanning tree of~$G$ and use monotonicity.
	
	\begin{proof}[Proof of Lemma~\ref{tree prime}]
		We prove this by induction on~$|T|$. If~$T$ consists of a single vertex, this is clearly true. For the inductive step, let $(T, \ell)$ be zero-avoiding and $t \in T$. Let $T_1, \ldots, T_d$ be the components of $T - t$ and $t_1, \ldots, t_d$ the neighbors of~$t$ in each of these, respectively. 
		
		The connected $t \in A \sub T$ are precisely the sets $A = \{ t \} \cup \bigcup_{j \in [d]} A_j$ with each~$A_j$ either being empty or $t_j \in A_j \sub T_j$ connected. This implies that
		\[
		Q(t,T) = \{ \ell(t) \} + \sum_{j \in [d]} (Q(t_j,T_j) \cup \{ 0 \} ) .
		\]
		
		The restrictions $(T_j, \ell|_{T_j})$ are zero-avoiding, so by inductive hypothesis we have $|Q(t_j,T_j)| \geq |T_j|$ for each $j \in [d]$. Since $0 \notin Q(t_j, T_j)$, the iterated Cauchy-Davenport Inequality yields
		\[
		|Q(t,T)| \geq 1 + \sum_{j \in [d]} |Q(t_j, T_j)| \geq 1 + \sum_{j \in [d]} |T_j| = |T|.
		\]
	\end{proof}

	Let us briefly explain how the Cauchy Davenport Inequality can in turn be deduced from Theorem~\ref{prime trivial}. Let~$p$ prime and $A, B \sub \mathbb{Z}_p$ non-empty such that $A + B \neq \mathbb{Z}_p$, thus also $C:= \mathbb{Z}_p \setminus -(A+B)$ is non-empty. 
	
	Enumerate the three sets, say $A = \{a_1, \ldots, a_{k+1}\}, B = \{ b_1, \ldots, b_{m+1}\}$ and $C = \{ c_1, \ldots, c_{n+1} \}$. Take three disjoint paths $u_1 \ldots u_k$, $v_1\ldots v_m$ and $w_1 \ldots w_n$, an extra vertex~$x$ and join~$x$ to $u_1, v_1$ and~$w_1$. Assign the labels $\ell(x) = a_1 + b_1 + c_1$, $\ell(u_i) = a_{i+1} - a_i$, $\ell(v_i) = b_{i+1} - b_i$ and $\ell(w_i) = c_{i+1} - c_i$ for all indices in the appropriate ranges, respectively. By definition, the resulting labeled graph is zero-avoiding, so by Theorem~\ref{prime trivial} we must have $k + m + n + 1 < p$ or, in other words, $|A + B|  > k+m$.
 		\end{section}

 		\begin{section}{The general case} \label{general}
 		
 		Let us now consider the case where~$|\G|$ is not prime. A statement analogous to Theorem~\ref{prime trivial} is then no longer true (an example will be given below) and the property of being zero-forcing is tied to the structural richness of the graph.
 		
		In light of Proposition~\ref{paths rule} it is clear that even sparse graphs such as paths can be zero-forcing once they have a certain order. Hence the property of being zero-forcing does not provide a strong measure of ``width" on its own and we should take the order of the graph into account as well. The general kind of question we are concerned with here is of the following type.
 		
 		\begin{question} \label{Q sparse min order}
 			Given some class~$\C$ of graphs, what is the minimum order of a graph in~$\C$ that is zero-forcing for~$\G$?
 		\end{question}

 		\begin{subsection}{Necessary conditions}

	Given Theorem~\ref{prime trivial} and Proposition~\ref{paths rule}, it might seem that the property of being zero-forcing depended solely on the order of the graph and not on its structure. To see that this is indeed not the case, consider the group~$\mathbb{Z}_4$ and a star whose center is labeled~1 and whose leaves receive the label~2: No matter how large we choose the star, it is zero-avoiding. The idea behind this example is captured by the following.
	
	\begin{lemma} \label{separator approach}
		Let~$\G'$ be a subgroup of~$\G$ and $X \sub V(G)$ with $|X| < D(\G / \G')$. If~$G$ is zero-forcing for~$\G$, then some component of $G - X$ is zero-forcing for~$\G'$.
	\end{lemma}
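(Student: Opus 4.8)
The plan is to prove the contrapositive: assuming that \emph{no} component of $G - X$ is zero-forcing for $\G'$, I would construct a $\G$-labeling $\ell$ of $G$ such that $(G, \ell)$ is zero-avoiding, which shows that $G$ is not zero-forcing for $\G$.

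The labeling is built from two pieces. On each component $C$ of $G - X$, since $C$ is not zero-forcing for $\G'$, fix a $\G'$-labeling $\ell_C$ with $(C, \ell_C)$ zero-avoiding, and let $\ell$ agree with $\ell_C$ on $C$. It remains to label $X$, and here the hypothesis $|X| < D(\G / \G')$ is used: let $\pi \colon \G \to \G / \G'$ be the quotient homomorphism. Because $|X| < D(\G / \G')$, the family of non-empty subsets of a set of size $|X|$ does not have the zero-sum property for $\G / \G'$, so there is a map $m \colon X \to \G / \G'$ with $\sum_{x \in S} m(x) \neq 0$ for every non-empty $S \sub X$. For each $x \in X$ choose $\ell(x) \in \G$ arbitrarily with $\pi(\ell(x)) = m(x)$.

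To finish I would verify that $(G, \ell)$ is zero-avoiding. Let $A \sub V(G)$ be non-empty and connected. If $A \cap X = \emptyset$, then $G[A]$ is connected and avoids $X$, so $A$ is contained in a single component $C$ of $G - X$ and $\ell(A) = \ell_C(A) \neq 0$. If $A \cap X \neq \emptyset$, write $\ell(A) = \sum_{x \in A \cap X} \ell(x) + \sum_{a \in A \setminus X} \ell(a)$. The second summand is a sum of values of the various $\ell_C$, hence lies in $\G'$, so $\pi(\ell(A)) = \sum_{x \in A \cap X} m(x) \neq 0$ because $A \cap X$ is a non-empty subset of $X$; in particular $\ell(A) \neq 0$.

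I do not expect a real obstacle: the lemma is essentially bookkeeping, and the only points needing care are that the bound $|X| < D(\G / \G')$ is invoked with the right convention --- it is exactly the condition that produces a zero-sum-free assignment on $X$ --- and that the ``interior'' contribution $\sum_{a \in A \setminus X} \ell(a)$ vanishes modulo $\G'$ even though $A \setminus X$ may meet several components of $G - X$ and need not be connected within any of them, which is harmless since only its image under $\pi$ matters.
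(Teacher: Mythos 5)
Your proof is correct and is essentially identical to the paper's: both label each component with a zero-avoiding $\G'$-labeling, use $|X| < D(\G/\G')$ to give $X$ labels whose non-empty partial sums avoid $\G'$, and observe that any connected set either stays in one component or meets $X$ and hence has label outside $\G'$. Your write-up merely makes the quotient-map bookkeeping explicit.
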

	
	\begin{proof}
	Suppose this was not the case, so every component~$C$ of $G - X$ has a $\G$-labeling~$\ell_C$ such that $(C, \ell_C)$ is zero-avoiding and~$\ell_C$ takes values only in~$\G'$. Since $|X| < D( \G / \G')$, there is a map $\ell_X : X \to \G$ such that for every non-empty $Y \sub X$ we have $\ell_X(Y) \notin \G'$.
	
	Combine these labelings to a $\G$-labeling~$\ell$ of~$G$. Every connected subgraph~$A$ of~$G$ is either contained in some component~$C$ of $G - X$, so that $\ell(A) = \ell_C(A) \neq 0$, or meets~$X$ so that $\ell(A) \notin \G'$ is non-zero. Thus $(G, \ell)$ is zero-avoiding.
	\end{proof}
 		
	This lemma provides a simple recursive method for constructing zero-avoiding labeled graphs. It does not suffice to capture all there is to it, however, as the example of a zero-avoiding $\mathbb{Z}_3 \times \mathbb{Z}_3$-labeled graph in Figure~\ref{counterex 3x3} shows: one cannot deduce from Lemma~\ref{separator approach} that the graph is not zero-forcing.
	
	\begin{figure}[h]
		\begin{center}
			\includegraphics[width=4cm]{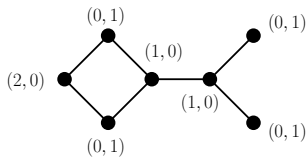}
		\end{center}
		\caption{A zero-avoiding $\mathbb{Z}_3 \times \mathbb{Z}_3$-labeled graph}
		 \label{counterex 3x3}
	\end{figure}

	We illustrate the use of Lemma~\ref{separator approach} by proving the following.
	
 	\begin{proposition} \label{small trees}
		Let~$T$ be a tree on less than $|\G|$ vertices. Then~$T$ is not zero-forcing for~$\G$.
	\end{proposition}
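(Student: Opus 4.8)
The plan is to argue by induction on $|\G|$, feeding a combinatorial fact about vertex separators in trees into Lemma~\ref{separator approach}. (For $|\G|$ prime the statement is also a special case of Theorem~\ref{prime trivial}, but it will drop out of the induction anyway, the subgroup used below being trivial in that case.)

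The combinatorial ingredient I would isolate is: \emph{for every tree $T$ on $n\geq 1$ vertices and every integer $q\geq 2$ there is a set $X\subseteq V(T)$ with $|X|\leq q-1$ such that every component of $T-X$ has at most $t:=\lfloor n/q\rfloor$ vertices.} To see this, maintain a rooted forest (initially $T$ with an arbitrary root) and build $X$ greedily. While some component has more than $t$ vertices, choose in it a vertex $v$ that is minimal with respect to the property ``the subtree rooted at $v$ has more than $t$ vertices''; then each child-subtree of $v$ has at most $t$ vertices. Move $v$ into $X$, set these child-subtrees aside as components of $T-X$ (each of size $\leq t$), and re-insert the rest of $v$'s component, re-rooted, into the forest. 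Each such step places one vertex into $X$ and removes $|T_v|-1\geq t$ further vertices into finished components; so after $k$ steps at least $k(t+1)$ vertices of $T$ are used up, whence $k(t+1)\leq n$, and since $t+1>n/q$ this forces $k<q$, i.e.\ $|X|\leq q-1$. When the process stops, every remaining component has at most $t$ vertices too. (One must check that a finished child-subtree really is a component of $T-X$: it is, because the component from which it was split off was itself a component of $T$ minus the already-removed vertices, and $v$ was its only neighbour there.)

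Given the claim, here is the deduction. Fix a prime $q$ dividing $|\G|$ and a subgroup $\G'$ of $\G$ of index $q$ --- such a subgroup exists because $q\mid|\G|$ --- so that $\G/\G'\cong\mathbb{Z}_q$ and hence $D(\G/\G')=q$. Apply the claim with this $q$ to obtain $X$ with $|X|\leq q-1<D(\G/\G')$ and all components of $T-X$ of size at most $\lfloor|T|/q\rfloor$. Since $|T|<|\G|$ we have $\lfloor|T|/q\rfloor\leq|T|/q<|\G|/q=|\G'|$, so every component $C$ of $T-X$ is a tree on fewer than $|\G'|$ vertices; as $|\G'|<|\G|$, the induction hypothesis says $C$ is not zero-forcing for $\G'$ (in the degenerate case $|\G'|=1$ there is simply nothing to check, as then $T-X$ has no vertices). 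By Lemma~\ref{separator approach}, $T$ is therefore not zero-forcing for $\G$, which closes the induction.

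The main obstacle is the separator claim, and within it the insistence on \emph{strictly fewer than} $q$ removed vertices, which is exactly what the strict inequality $t+1>n/q$ delivers; this is needed because Lemma~\ref{separator approach} asks for $|X|<D(\G/\G')$. The remaining points --- that the finished subtrees are genuine components of $T-X$ and that the hypotheses of the induction and of Lemma~\ref{separator approach} are met --- are routine bookkeeping.
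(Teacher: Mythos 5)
Your proof is correct and follows essentially the same route as the paper: induction on $|\G|$, a tree-separator lemma (your greedy construction is the same ``minimal vertex with a large subtree'' idea as the paper's Lemma~\ref{tree sep}, just phrased iteratively), and then Lemma~\ref{separator approach} applied to a prime-index subgroup, which for abelian groups is exactly the paper's choice of a maximal proper subgroup. The only cosmetic difference is that you absorb the cyclic/trivial cases into the induction rather than treating them separately.
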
	
	
		Combined with Proposition~\ref{paths rule}, this shows that the minimum order of a zero-forcing tree is precisely the order of~$\G$, thus answering Question~\ref{Q sparse min order} for the class of trees. Besides Lemma~\ref{separator approach}, the proof is based on the following well-known separation property of trees. We include a proof for completeness.
	
	\begin{lemma} \label{tree sep}
		Let $m, n$ be positive integers and~$T$ a tree of order less than~$mn$. Then there exists $X \sub V(T)$, $|X| < m$, such that every component of $T - X$ has order less than~$n$.
	\end{lemma}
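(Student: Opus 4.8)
The plan is to prove this by induction on~$m$. If $m = 1$ then $|T| < n$ and $X = \emptyset$ already works, so assume $m \geq 2$, that the statement holds for $m-1$, and that $|T| < mn$. We may further assume $|T| \geq n$, since otherwise $X = \emptyset$ again does the job.

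First I would fix an arbitrary root~$r$ of~$T$ and, for each vertex~$v$, write~$T_v$ for the subtree consisting of~$v$ and all its descendants. Since $|T_r| = |T| \geq n$, I can pick a vertex~$x$ of maximum depth among those~$v$ with $|T_v| \geq n$; then $|T_x| \geq n$ while $|T_c| < n$ for every child~$c$ of~$x$. Deleting~$x$ from~$T$ splits it into the subtrees~$T_c$, one for each child~$c$ of~$x$ (each of order less than~$n$), together with the single component~$T'$ containing the parent of~$x$ (with~$T'$ empty if $x = r$). This~$T'$ is again a tree, and $|T'| = |T| - |T_x| < mn - n = (m-1)n$, so the induction hypothesis applies to it.

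Applying it, I get a set $X' \sub V(T')$ with $|X'| < m-1$ such that every component of $T' - X'$ has order less than~$n$. Setting $X := X' \cup \{x\}$ gives $|X| \leq |X'| + 1 < m$, and since $X' \sub V(T')$ is disjoint from every~$T_c$, the components of $T - X$ are precisely the subtrees~$T_c$ together with the components of $T' - X'$ — all of order less than~$n$, as required. The argument is essentially routine; the only points that need a little care are the existence of the separating vertex~$x$ (where connectedness of~$T$ is used) and the bookkeeping that the recursion spends exactly one unit of the budget on~$m$ while removing at least~$n$ vertices, which is what makes the bound $|X| < m$ come out.
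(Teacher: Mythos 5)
Your proof is correct and follows essentially the same route as the paper: root the tree, pick a deepest vertex $x$ whose subtree has order at least $n$, delete it, and recurse on the component containing the root with parameter $m-1$. The only cosmetic difference is that you fold the $x=r$ case into the recursion via an empty $T'$, where the paper treats it separately.
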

	
	\begin{proof}
		By induction on~$m$, the case $m=1$ being trivial. Choose an arbitrary root~$r$ for~$T$, thus inducing a partial order on~$V(T)$. If $|T| < n$, we may take $X  = \emptyset$. Otherwise choose $x \in V(T)$ maximal in the tree-order such that the subtree rooted at~$x$ has order at least~$n$. Every component of $T - x$ not containing the root has order less than~$n$. Hence if $x = r$, we simply take $X = \{ x \}$. Otherwise, let~$S$ be the component of $T - x$ containing~$r$. Since $|S| \leq |T| - n < (m-1)n$, we can apply the inductive hypothesis to $(S, m-1, n)$ and find $Y \sub V(S)$, $|Y| < m-1$, such that every component of $S - Y$ has order less than~$n$. Then $Y \cup \{ x \}$ is as desired.
	\end{proof}

	\begin{proof}[Proof of Proposition~\ref{small trees}]
	By induction on~$|\G|$. Since the statement is trivial when~$\G$ is cyclic, we may assume that~$\G$ has a non-trivial proper subgroup. Let~$\G'$ be a maximal proper subgroup of~$\G$. Then $\G / \G' \cong \mathbb{Z}_p$ for some prime number~$p$. By Lemma~\ref{tree sep} applied to $(T, p, |\G'|)$, there is some $X \sub V(T)$ with $|X| < p$ such that every component of $T - X$ has less than~$|\G'|$ vertices. It follows from the inductive hypothesis that no component of $T - X$ is zero-forcing for~$\G'$. By Lemma~\ref{separator approach}, $T$ fails to be zero-forcing for~$\G$.
	\end{proof}		
	
%

	One can derive statements similar to Proposition~\ref{small trees} for other classes of graphs by replacing Lemma~\ref{tree sep} with appropriate separator theorems, for example for planar graphs~\cite{liptontarjan}, bounded-genus graphs~\cite{boundedgenus} or graphs excluding a fixed graph as a minor~\cite{alonseymourthomas}. These statements then require more information about~$\G$ than simply its order, however. 
	
%
%
%
	
 		\end{subsection}

\begin{subsection}{Sufficient conditions}

	We now turn to the task of finding sufficient conditions for a graph to be zero-forcing for a finite abelian group~$\G$. Since most graph parameters are bounded on graphs without long paths, Proposition~\ref{paths rule} already provides us with a variety of such conditions. But all of them involve the order of the group and we hope to find estimates based on $D(\G)$ instead, which is often much smaller than~$|\G|$. 
	
	A promising approach is to find a highly connected substructure within the graph to guarantee the existence of connected zero-sum sets under any labeling. As a warm-up, suppose that~$G$ is $D(\G)$-connected and let~$\ell$ be any $\G$-labeling of~$G$. Choose $A \sub V(G)$ maximal with $\ell(A) = 0$. Then $V(G) \setminus A$ necessarily consists of less than $D(\G)$ vertices and~$A$ is non-empty. Since~$G$ is $D(\G)$-connected, $A$ must be connected. Therefore, any condition guaranteeing the existence of a $D(\G)$-connected subgraph (or minor) of~$G$ will ensure that~$G$ is indeed zero-forcing. By a theorem of Mader~\cite{MaderSub}, an average-degree of $4D(\G)$ suffices. Similarly, a famous conjecture of Hadwiger~\cite{Hadwiger} asserts that a graph of chromatic number at least~$k$ always contains the complete graph on~$k$ vertices as a minor. If true, this would imply the following.
	
	\begin{conjecture}
	If $\chi(G) \geq D(\G)$, then~$G$ is zero-forcing for~$\G$.
	\end{conjecture}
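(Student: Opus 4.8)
The plan is to derive this conjecture from Hadwiger's conjecture together with the minor-monotonicity of zero-forcing (Proposition~\ref{minorclosed}), by feeding the argument preceding the statement a complete-graph minor in place of a $D(\G)$-connected subgraph. The crucial observation is that $K_{D(\G)}$ is itself zero-forcing for~$\G$: given any labeling $\ell : V(K_{D(\G)}) \to \G$, the $D(\G)$ group elements in the list $(\ell(v))_v$ admit, by the very definition of the Davenport constant, a non-empty set~$A$ of vertices with $\ell(A) = 0$; and since every subset of $V(K_{D(\G)})$ induces a complete, hence connected, subgraph, this~$A$ witnesses that $(K_{D(\G)}, \ell)$ is not zero-avoiding. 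This is exactly the warm-up argument above specialised to the complete graph, where the connectivity needed there is automatic because a complete graph has no disconnected induced subgraph.

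Granting this, the rest is immediate. If $\chi(G) \geq D(\G)$, then by Hadwiger's conjecture~$G$ contains $K_{D(\G)}$ as a minor; since $K_{D(\G)}$ is zero-forcing for~$\G$, Proposition~\ref{minorclosed} gives that~$G$ is zero-forcing for~$\G$. No separate order hypothesis is needed, since $\chi(G) \geq D(\G)$ already forces $|V(G)| \geq D(\G)$.

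The main obstacle is that this route is only as strong as Hadwiger's conjecture, which is open for $D(\G) \geq 7$, so in the stated form the result would be conditional. One uses considerably less than the full conjecture --- it is enough that $\chi(G) \geq D(\G)$ force a $K_{D(\G)}$-minor, and in fact enough that it force \emph{any} minor that is zero-forcing for~$\G$ --- which suggests more tractable variants. Unconditionally, replacing the linear dependence by the $O\!\left(k\sqrt{\log k}\right)$ threshold of Kostochka and Thomason for forcing a $K_k$-minor would yield: if $\chi(G) \geq c\, D(\G)\sqrt{\log D(\G)}$ for a suitable absolute constant~$c$, then~$G$ is zero-forcing for~$\G$; and the Mader bound quoted above already gives the analogous statement for average degree~$4D(\G)$. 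Alternatively, one could try to pin down the minor-minimal zero-forcing graphs for~$\G$ directly and ask which parameter of~$G$ forces one of them as a minor; note that $K_{D(\G)}$ itself need not be among them, so such an approach could give a sharper and unconditional criterion than $\chi(G) \geq D(\G)$.
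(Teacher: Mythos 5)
This statement is a conjecture in the paper, not a proved theorem: the paper only remarks that it would follow from Hadwiger's conjecture, and your argument is precisely that intended implication, with the details correctly filled in --- $K_{D(\G)}$ is zero-forcing for~$\G$ because every non-empty vertex subset of a complete graph is connected and the Davenport constant supplies a non-empty zero-sum subset among any $D(\G)$ group elements, and Proposition~\ref{minorclosed} then transfers this to any graph with a $K_{D(\G)}$-minor. So your reasoning is sound and matches the paper's approach, but it remains conditional on Hadwiger's conjecture and therefore does not settle the statement unconditionally; your closing remarks (the Kostochka--Thomason weakening and the Mader average-degree bound) correctly identify what can be obtained without that hypothesis.
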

	
	Note that if $\chi(G) \geq D(\G)$, then~$G$ contains a subgraph with minimum-degree at least $D(\G) - 1$. Although this does not suffice to force a sufficiently large complete minor, as de~la~Vega observed in~\cite{Vega}, it might just be enough for our purposes.
	
	\begin{conjecture}
	If $\delta(G) \geq D(\G) - 1$, then~$G$ is zero-forcing for~$\G$.
	\end{conjecture}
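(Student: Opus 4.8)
The natural template is the warm-up for $D(\G)$-connected graphs recalled just above: given a $\G$-labeling $\ell$, pick $A \sub V(G)$ maximal with $\ell(A)=0$; the labels on $V(G)\setminus A$ then admit no non-empty zero-sum subsequence (else $A$ could be enlarged), so $|V(G)\setminus A|<D(\G)$, and since $\delta(G)\ge D(\G)-1$ forces $|V(G)|\ge D(\G)$ the set $A$ is non-empty. The only missing ingredient is that $A$ be \emph{connected}, which in the $D(\G)$-connected case is free because $G-(V(G)\setminus A)$ is still connected. Under the weaker hypothesis this fails, and the whole content of the conjecture is to recover a connected zero-sum set by other means. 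Two remarks frame the difficulty: the bound is sharp, since $K_{D(\G)-1}$ equipped with a zero-sum-free labeling of length $D(\G)-1$ is zero-avoiding; and one cannot simply reduce to the warm-up, since disjoint copies of $K_{D(\G)}$ satisfy $\delta=D(\G)-1$ yet contain no $D(\G)$-connected subgraph. So any proof must genuinely exploit the zero-sum structure.

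The first concrete step I would carry out is a local lemma: \emph{if $(G,\ell)$ is zero-avoiding and $\delta(G)\ge D(\G)-1$, then every vertex $v$ has a set $S_v\sub N(v)$ with $|S_v|\ge 2$ and $\ell(S_v)=0$.} To see this, consider the sequence consisting of $\ell(v)$ together with $\ell(u)$ for all $u\in N(v)$; it has length $\deg(v)+1\ge D(\G)$, hence a non-empty zero-sum subsequence. If that subsequence used the term $\ell(v)$, then $\{v\}\cup S$ with the corresponding $S\sub N(v)$ would be connected with $\ell(\{v\}\cup S)=0$, contradicting zero-avoidance; hence the subsequence lies entirely in $N(v)$, and it has at least two terms because no single label is $0$.

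The decisive step is then to promote these neighbourhood zero-sum sets --- which are typically independent sets --- to a single connected one. I see two plausible routes. The first is a greedy exchange argument: maintain a connected set $A$ and repeatedly enlarge it by adjoining, in one move, a zero-sum subset of fresh vertices taken from the neighbourhood of some vertex of $A$, so that $\ell(A)$ is left unchanged while $|A|$ strictly increases; one would then analyse the terminal configuration, hoping the obstruction to continuing forces $\ell(A)=0$ after a single initial correction. The second is an induction on $D(\G)$ through a maximal proper subgroup $\G'$ with $\G/\G'\cong\Z_p$: by Theorem~\ref{prime trivial}, applied to $\ell$ composed with the quotient map (we may assume $G$ connected, and it has at least $D(\G)\ge|\G/\G'|$ vertices), there is a connected $C$ with $\ell(C)\in\G'$; one would like to contract $C$ and recurse with $\G'$, but contraction can lower the minimum degree, so this would require a more robust inductive statement, in effect a partial converse to Lemma~\ref{separator approach}.

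The main obstacle, on either route, is precisely the gap between disconnected and connected zero-sum sets: disconnected ones are abundant --- one sits inside every neighbourhood --- but merging two disjoint connected sets $A_1,A_2$ with $\ell(A_1)=-\ell(A_2)$ into a connected zero-sum set requires an edge between them, or a connected label-$0$ set adjacent to both, and the minimum-degree hypothesis does not obviously supply either. I expect the real work to lie in controlling how the components of $G-A$, for $A$ a maximal connected set, attach back to $A$, and in forcing one of these attachments to realise the element that is missing.
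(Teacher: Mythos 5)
This statement is posed in the paper as an open conjecture; no proof of it exists there, so there is nothing to compare your argument against. More importantly, your proposal is not a proof: you are candid that the ``decisive step'' --- converting the abundant \emph{disconnected} zero-sum sets into a single \emph{connected} one --- is left open, and neither of the two routes you sketch is carried out. The greedy exchange route stalls exactly where you say it does: after adjoining a zero-sum subset $S_v \sub N(v)$ of fresh vertices to a connected $A$, the terminal configuration gives no evident way to force $\ell(A)=0$, because the element $-\ell(A)$ need not be realised by any set attached to $A$. The inductive route through a maximal subgroup $\G'$ with $\G/\G' \cong \Z_p$ does produce, via Theorem~\ref{prime trivial}, a connected $C$ with $\ell(C) \in \G'$, but contracting $C$ can destroy the minimum-degree hypothesis, and you would in any case need $\delta \geq D(\G')-1$ for the contracted graph while only $D(\G) \geq D(\G') + D(\G/\G') - 1$ is available; the inductive statement you would need is strictly stronger than the conjecture itself.

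That said, the material you do establish is correct and worth recording: the warm-up recap is accurate (with $\delta(G) \geq D(\G)-1$ giving $|G| \geq D(\G)$ so that a maximal zero-sum $A$ is non-empty); the sharpness example $K_{D(\G)-1}$ with a zero-sum-free labeling of length $D(\G)-1$ is valid, since every vertex subset of a complete graph is connected; and your local lemma --- in a zero-avoiding labeling with $\delta(G) \geq D(\G)-1$, every neighbourhood $N(v)$ contains a zero-sum set of size at least two --- is correctly proved (the zero-sum subsequence of $\ell(v), \ell(u)_{u \in N(v)}$ cannot use $v$, since the corresponding star would be connected, and cannot be a singleton, since no label is $0$). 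But these observations only sharpen the statement of the obstacle; they do not overcome it, and the conjecture remains open.
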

	
	We introduce a rather abstract notion of a highly connected substructure in a graph. It is strong enough to guarantee a graph to be zero-forcing (Proposition~\ref{sbook} below), but still sufficiently general to be readily implied by other conditions (see Corollary~\ref{extremal function}). Given a graph~$G$ and a positive integer~$k$, call a family~$\K$ of disjoint non-empty connected subgraphs of~$G$ a \emph{scattered bramble of order~$k$} if for every $X \sub V(G)$ with $|X| < k$ there is a unique component of $G - X$ entirely containing some $K \in \K$. By Menger's Theorem, this is equivalent to requiring that for any two $K, K' \in \K$, there is either an edge or a collection of~$k$ internally disjoint paths between them.

		\begin{proposition} \label{sbook}
		Let~$G$ be a graph and~$\K$ a scattered bramble of order~$k$ in~$G$. If $| \K | \geq D(\G)$ and $k+1 \geq D(\G)$, then~$G$ is zero-forcing for~$\G$.
	\end{proposition}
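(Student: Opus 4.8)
The plan is to reproduce the ``warm-up'' argument preceding this proposition, with the scattered bramble $\K$ playing the part that $D(\G)$-connectivity played there. Fix an arbitrary $\G$-labeling $\ell$ of $G$; we must produce a non-empty connected $A \sub V(G)$ with $\ell(A) = 0$. We may assume $\G$ is non-trivial and (applying the defining property of $\K$ with $X = \emptyset$, legitimate since $k \geq D(\G) - 1 \geq 1$) that $G$ is connected. Since the members of $\K$ are pairwise disjoint and non-empty, $|V(G)| \geq |\K| \geq D(\G)$, so by the very definition of the Davenport constant some non-empty subset of $V(G)$ is zero-sum. Hence we may choose $A \sub V(G)$ \emph{maximal} with $\ell(A) = 0$, and then $A \neq \emptyset$.

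Write $S := V(G) \setminus A$. If the sequence $(\ell(v))_{v \in S}$ had a non-empty zero-sum subsequence we could enlarge $A$, so $|S| \leq D(\G) - 1 \leq k$. The heart of the matter is to show that $G[A]$ is connected, for then $A$ itself is the desired set. Assume for the moment that $|S| < k$ --- the borderline $|S| = k$, which can occur only when $D(\G) = k+1$, I would treat separately by deleting one vertex fewer and tracking how a single reinsertion affects the components. Applying the bramble to $X = S$, there is a unique component $C_0$ of $G - S = G[A]$ that entirely contains some member of $\K$; by the Menger form of the definition $C_0$ contains \emph{every} member of $\K$ disjoint from $S$, and at least one such member exists because at most $|S| < |\K|$ of them can meet $S$. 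Thus every component of $G[A]$ other than $C_0$ is disjoint from $\bigcup \K$.

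It remains to prove $\ell(C_0) = 0$, equivalently that $G[A]$ cannot be disconnected. Suppose it is. Then, as $\ell(A) = 0$, some component $C_1 \neq C_0$ has $\ell(C_1) \neq 0$, and $C_1 \cap \bigcup \K = \emptyset$. To contradict the maximality of $A$ the idea is to absorb a suitable path of $G$ joining $C_0$ to $C_1$: such a path must pass through $S$, and one wants to select it so that the $S$-vertices it uses sum to zero, after which adding them to $A$ yields a strictly larger zero-sum set. Making this airtight --- or, alternatively, contracting each $K \in \K$ and running an induction on $|\G|$ in the spirit of Proposition~\ref{small trees} on the (contracted) component $C_0$, which still meets $\K$ --- is the step I expect to be the main obstacle. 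It is precisely here that the two numerical hypotheses get used together: $k + 1 \geq D(\G)$ is what keeps $S$ small enough for the bramble to bite, while $|\K| \geq D(\G)$ both furnishes an initial zero-sum object among the values $\ell(K)$ and leaves enough of $\K$ inside $C_0$ to power the re-absorption or the inductive descent.

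A second conceivable route, also reducing everything to the warm-up, is to extract from $\K$ a $D(\G)$-connected minor of $G$ and invoke Proposition~\ref{minorclosed}; but a scattered bramble of order $k$ need not by itself yield a $k$-connected minor, so this would require an additional extraction and I would not expect it to be any easier than the route above.
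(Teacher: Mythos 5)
There is a genuine gap, and it sits exactly where you flagged it: showing that the maximal zero-sum set $A$ (or its main component $C_0$) can be turned into a \emph{connected} zero-sum set. In the warm-up, $D(\G)$-connectivity makes $G - S$ connected for any $S$ with $|S| < D(\G)$; a scattered bramble gives only that $G-S$ has a \emph{unique} component meeting $\bigcup\K$, and says nothing about the other components, which may carry non-zero label sums. Worse, the repair you sketch is self-defeating: since $A$ is maximal with $\ell(A)=0$, \emph{no} non-empty subset of $S$ is zero-sum (that is precisely how you bounded $|S|$), so a path from $C_0$ to $C_1$ whose $S$-vertices sum to zero cannot exist --- adding those vertices alone would already contradict maximality. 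Any fix must simultaneously discard $C_1$ and absorb part of $S$, and then the resulting set need not be larger than $A$, so maximality of $|A|$ gives no contradiction. The borderline case $|S| = k$ is a second unresolved issue, since the bramble property is only available for $|X| < k$.

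The paper's proof sidesteps all of this by maximizing a different quantity and keeping connectivity as an invariant. Since $|\K| \geq D(\G)$, there is a non-empty $\K' \sub \K$ with $\sum_{K \in \K'} \ell(K) = 0$. One then chooses $A$ \emph{connected} with $\ell(A \setminus \bigcup \K') = 0$, maximizing first the number of $K \in \K'$ contained in $A$ and then $|A|$; each single $K \in \K'$ is a candidate, so $A$ exists and contains some $K_0 \in \K'$. If some $K \in \K'$ is not contained in $A$, maximality forces $K \cap (A \cup N(A)) = \emptyset$, so $N(A)$ separates $K_0$ from $K$ and the bramble gives $|N(A)| \geq k \geq D(\G)-1$. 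Applying the Davenport constant to the sequence $\ell(A), \ell(x)_{x \in N(A)}$ then either produces a connected zero-sum set $A \cup X$ directly, or a zero-sum $B \sub N(A)$ whose addition to $A$ (legitimate because $N(A)$ is disjoint from $\bigcup\K'$) contradicts maximality. Hence every $K \in \K'$ lies in $A$ and $\ell(A) = 0$. Your instinct about where the two numerical hypotheses enter is right, but the argument you propose cannot be completed along the lines you describe; the key missing idea is to maximize over connected sets satisfying the relaxed condition $\ell(A \setminus \bigcup\K') = 0$ rather than over arbitrary zero-sum sets.
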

	
	\begin{proof}
	Assume for a contradiction that $(G, \ell)$ was zero-avoiding for some $\G$-labeling~$\ell$. Since $|\K| \geq D(\G)$, we find a non-empty subset $\K' \sub \K$ such that $\sum_{K \in \K'} \ell(K) = 0$. Choose $A \sub V(G)$ connected satisfying $\ell(A \setminus \bigcup \K') = 0$	such that the number of $K \in \K'$ with $K \sub A$ is maximal and, subject to this, $A$ is largest possible. Since any $K \in \K'$ would be a candidate for such a set, we know there is at least one $K_0 \in \K'$ contained in~$A$. We will show that in fact \emph{every} $K \in \K'$ is contained in~$A$, thus showing that $\ell(A) = 0$.
	
	Let $\K^* \sub \K'$ be the set of all $K \in \K'$ with $K \not\sub A$ and assume for a contradiction that $\K^* \neq \emptyset$. Then $K \cap (A \cup N(A)) = \emptyset$ for every $K \in \K^*$: Otherwise, $A \cup K$ would violate the maximality of~$A$. Observe that $N(A)$ separates $K_0 \sub A$ from any $K \in \K^*$. Since~$\K$ is a scattered bramble of order~$k$, this implies that $|N(A)| \geq k \geq D(\G) - 1$. By definition, the sequence
	\[
	\ell(A), \ell(x)_{x \in N(A)}
	\]
	contains a non-empty subsequence summing up to zero. If that subsequence consisted of~$\ell(A)$ and $\ell(x)_{x \in X}$ for some $X \sub N(A)$, we had the connected set $A \cup X$ with $\ell(A \cup X) = 0$, contradicting our assumption that $(G, \ell)$ was zero-avoiding. Therefore there must be a non-empty $B \sub N(A)$ with $\ell(B) = 0$. But then $A \cup B$ violates the maximality of~$A$, since $N(A) \cap \bigcup \K' = \emptyset$.
	\end{proof}

	
	Scattered brambles are a generalization of \emph{$k$-blocks}, where each element of~$\K$ consists of a single vertex (see~\cite{maderblock, kblocks}). But our concept is more versatile: For instance, the rows of a $k \times k$-grid consistute a scattered bramble of order~$k$ and Proposition~\ref{sbook} thus provides a zero-sum theorem for grids.
	
	\begin{corollary} \label{extremal function}
	Let~$G$ be a graph and~$\G$ a finite abelian group with $D(\G) > 2$.
	\begin{enumerate}[(a)]
		\item If $d(G) \geq 2D(\G)-4$, then~$G$ is zero-forcing for~$\G$.
		\item If $\tw(G) \geq \min\{ |\G| - 1, D(\G)^2 - D(\G) - 1 \}$, then~$G$ is zero-forcing for~$\G$.
	\end{enumerate}
	\end{corollary}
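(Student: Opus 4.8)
Both statements will be deduced from Proposition~\ref{sbook} applied with $k=D(\G)-1$; since then $k+1=D(\G)$, it suffices in each case to exhibit a scattered bramble of order $D(\G)-1$ with at least $D(\G)$ elements. I will use the following observation: \emph{if $M$ is a $(D(\G)-1)$-connected minor of $G$, with branch sets $(B_v\colon v\in V(M))$, then $\{B_v\colon v\in V(M)\}$ is a scattered bramble of order $D(\G)-1$ with $|V(M)|\geq D(\G)$ elements.} Indeed, for $X\sub V(G)$ with $|X|<D(\G)-1$, at most $|X|$ of the (disjoint) branch sets meet $X$; deleting the corresponding vertices from the $(D(\G)-1)$-connected graph $M$ leaves a connected, non-empty minor of $G-X$, so the union of the remaining branch sets lies in a single component of $G-X$, while every branch set meeting $X$ lies in no component of $G-X$ at all.

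For part~(a), I would argue that the hypothesis forces such a minor. Write $n=|V(G)|$; the assumption $d(G)\geq 2D(\G)-4$ means $|E(G)|\geq(D(\G)-2)\,n$, which already exceeds the maximum number of edges of an $n$-vertex graph without a $(D(\G)-1)$-connected minor (this maximum is linear in $n$, namely $(D(\G)-2)n-\binom{D(\G)-1}{2}$, attained by the $(D(\G)-2)$-trees; positivity of the subtracted term uses $D(\G)>2$). Hence $G$ has a $(D(\G)-1)$-connected minor, and the observation above together with Proposition~\ref{sbook} finishes part~(a). Two remarks: first, it is essential that this is a density hypothesis and not merely ``$G$ has a subgraph of minimum degree $D(\G)-1$'' — the latter says only that $G$ is not $(D(\G)-2)$-degenerate, and were it enough the conjecture that $\delta(G)\geq D(\G)-1$ implies zero-forcing would follow; second, passing to a minor rather than a subgraph is what keeps the constant small, since extracting a $(D(\G)-1)$-connected \emph{subgraph} would require roughly twice the average degree (Mader~\cite{MaderSub}).

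For part~(b), suppose first that $\tw(G)\geq|\G|-1$. Some component $C$ of $G$ has $\tw(C)\geq|\G|-1$; a depth-first search tree of $C$ yields a tree-decomposition whose bag at a node is the set of vertices on the root-path to it, so $\tw(C)$ is smaller than the largest number of vertices on such a path, hence smaller than the number of vertices on a longest path in $C$. Thus $C$, and so $G$, contains the path on $|\G|$ vertices, which is zero-forcing for $\G$ by Proposition~\ref{paths rule}; by Proposition~\ref{minorclosed}, so is $G$. Now suppose instead that $\tw(G)\geq D(\G)^2-D(\G)-1=D(\G)(D(\G)-1)-1$. Taking a lean tree-decomposition of $G$, some bag $B$ has $|B|\geq D(\G)(D(\G)-1)$; leanness makes $B$ well-linked, so any two subsets of $B$ of the same size are joined by that many vertex-disjoint paths. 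Partition $B$ into $D(\G)$ blocks $W_1,\dots,W_{D(\G)}$ of size $D(\G)-1$ each; then, by well-linkedness and Menger's Theorem, no fewer than $D(\G)-1$ vertices can separate $W_i$ from $W_j$ for $i\neq j$. It remains to enlarge $W_1,\dots,W_{D(\G)}$ to pairwise disjoint connected subgraphs $K_1,\dots,K_{D(\G)}$; these then form a scattered bramble of order $D(\G)-1$ with $D(\G)$ elements (no fewer than $D(\G)-1$ vertices separate $K_i$ from $K_j$, since they contain the blocks), and Proposition~\ref{sbook} applies.

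The heart of the matter is this last enlargement step in part~(b): turning the $D(\G)$ blocks of a well-linked set of size $D(\G)(D(\G)-1)$ into pairwise disjoint connected subgraphs while keeping the treewidth threshold at $D(\G)^2-D(\G)-1$. A crude attempt — take a spanning tree and cut it into connected pieces each meeting $B$ in between $D(\G)-1$ and $2(D(\G)-1)$ vertices — yields only about $D(\G)/2$ pieces from a set of that size, so instead one must route disjoint trees attaching to the blocks, using the well-linkedness, and it is exactly this routing that has to be organised efficiently. Part~(a), by contrast, is essentially immediate once the extremal bound for $(D(\G)-1)$-connected minors is invoked.
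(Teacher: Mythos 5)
Your overall strategy coincides with the paper's: both parts are reduced to Proposition~\ref{sbook} by exhibiting a scattered bramble of order $D(\G)-1$ with $D(\G)$ elements, and your first case of (b) (treewidth at least $|\G|-1$ forces a path on $|\G|$ vertices via DFS down-closures) is exactly the paper's argument. Your observation that the branch sets of a $(D(\G)-1)$-connected minor form such a scattered bramble is also correct. But each part contains a genuine gap.

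In part~(a), everything hinges on the assertion that every $n$-vertex graph with more than $(D(\G)-2)n-\binom{D(\G)-1}{2}$ edges has a $(D(\G)-1)$-connected minor. You state this as a known extremal result, but you neither prove nor reference it, and it is substantially stronger than what is available at this density. The standard deletion/contraction induction for such extremal functions (delete vertices of degree $\le k-1$, contract edges in $\le k-2$ triangles) stalls at a minor with $\delta\ge k$ in which every edge lies in $\ge k-1$ triangles, and such a graph need not be $k$-connected nor obviously contain a $k$-connected minor; the usual trick of splitting along a small separator fails because the separator need not be a clique. What \emph{is} known at average degree $2(k-1)$ --- and what the paper cites (\cite[Theorem~5.3]{kblocks}) --- is only that $G$ has a \emph{minor containing a $k$-block of size $k+1$} (with $k=D(\G)-1$). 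A $k$-block is precisely a scattered bramble of order $k$ with singleton elements, so it plugs directly into Proposition~\ref{sbook}, whereas a $k$-connected minor does not follow from a $k$-block. Unless you can supply a reference or proof for your extremal claim, this step is a hole; the fix is to weaken the intermediate structure from ``$k$-connected minor'' to ``$k$-block in a minor''.

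In part~(b), second case, your setup (a bag of a lean tree-decomposition is well-linked; partition it into $D(\G)$ blocks of size $D(\G)-1$; any two blocks are joined by $D(\G)-1$ disjoint paths) is fine, but you then openly defer the step of enlarging the blocks $W_1,\dots,W_{D(\G)}$ to \emph{pairwise disjoint connected} subgraphs. That enlargement is not a technicality --- it is the entire content of the result the paper invokes here, \cite[Lemma~3.2]{reedwood}, which produces from treewidth $D(\G)^2-D(\G)-1$ a family of $D(\G)$ disjoint \emph{paths} each containing $D(\G)-1$ vertices of a well-linked set, i.e.\ exactly the scattered bramble required. As written, your proof of (b) establishes only the first case and the separation properties of the (possibly disconnected) blocks; the routing you describe as ``the heart of the matter'' is left undone.
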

	
	\begin{proof}
	(a) By \cite[Theorem~5.3]{kblocks}, $G$ has a minor containing a $(D(\G)-1)$-block of size at least $D(\G)$.
	
	(b) It is easy to see ~-- consider down-closures in a depth-first search tree of~$G$ ~-- that if~$G$ has tree-width at least~$|\G|-1$, then it contains a path on~$|\G|$ vertices and we are done by Proposition~\ref{paths rule}. If~$G$ has tree-width at least $D(\G)^2 - D(\G) - 1$, then by \cite[Lemma~3.2]{reedwood}, $G$ contains a scattered bramble of order~${D(\G) - 1}$ consisting of~$D(\G)$ disjoint paths.
	\end{proof}

\end{subsection}
 		
 		\end{section}
 		
	\begin{section}{Computability and algorithms} \label{logic}
	
		We now address the algorithmic problem of deciding, given a graph~$G$ and a finite abelian group~$\G$, whether~$G$ is zero-forcing for~$\G$. This can be done by testing all possible $\G$-labelings, although that is practically unfeasible even for small graphs and groups.
		
		\begin{question}
			Given a graph~$G$ and (some representation of) a finite abelian group~$\G$, how hard is it computationally to decide whether~$G$ is zero-forcing for~$\G$?
		\end{question}
		
	
	The situation changes when we treat the group~$\G$ as fixed and only the graph~$G$ as input. We know from Corollary~\ref{finite obstruction set} that there is a finite set $Z^{\ind}(\G)$ of graphs such that a graph is zero-forcing for~$\G$ if and only if it contains some graph from $Z^{\ind}(\G)$ as an induced subgraph. There exists, therefore, a very simple polynomial-time algorithm to decide whether a given graph~$G$ is $\G$-zero: Just go through all graphs in $Z^{\ind}(\G)$ and test whether they occur within~$G$.
	
	However, this proof is non-constructive because we have given no indication of how to produce this set $Z^{\ind}(\G)$ in the first place. The following result overcomes this shortcoming partially.
	
	\begin{theorem} \label{computable obstructions}
		There is a computable function $N : \mathbb{N} \to \mathbb{N}$ such that the following holds: If a graph~$G$ is zero-forcing for a finite abelian group~$\G$, but none of its proper induced subgraphs is, then $|G| \leq N(|\G|)$. In particular, the set $Z^{\ind}(\G)$ is computable.
	\end{theorem}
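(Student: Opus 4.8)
The plan is to make the non-constructive argument of Corollary~\ref{finite obstruction set} effective by tracking the quantitative content of Ding's well-quasi-ordering theorem. The key observation is that a graph~$G$ which is zero-forcing for~$\G$ but induced-subgraph-minimal with this property cannot contain a path on $|\G|+1$ vertices (by Proposition~\ref{paths rule}, such a path is already zero-forcing, contradicting minimality). So the entire set $Z^{\ind}(\G)$ lives inside the class~$\mathcal{P}_k$ of graphs with no induced path on $k := |\G|+1$ vertices. What we need is a \emph{computable bound} on the size of an antichain of minimal elements inside~$\mathcal{P}_k$ under the induced-subgraph order, where ``minimal'' is relative to the monotone (upward-closed in the induced-subgraph order) property of being zero-forcing for~$\G$.

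First I would recall that $Z^{\ind}(\G)$ is an antichain in the induced-subgraph order: no member contains another as a proper induced subgraph, by construction. Second, I would invoke an \emph{effective} form of Ding's theorem: the class $\mathcal{P}_k$ is well-quasi-ordered by induced subgraphs, and moreover -- this is the point one must extract from Ding's proof in~\cite{Ding}, which proceeds via a bounded decomposition of $P_k$-free graphs -- there is a computable function bounding the length (hence, since the graphs are $P_k$-free and antichains of bounded-path-length graphs have bounded size, also the maximum order) of any antichain in $\mathcal{P}_k$. Alternatively, and perhaps more cleanly, one extracts from Ding's structural decomposition a computable function $f$ such that any $P_k$-free graph on more than $f(m)$ vertices contains every $P_k$-free graph on~$m$ vertices as an induced subgraph; feeding this back, a minimal zero-forcing graph on more than $f(m)$ vertices would contain a smaller zero-forcing graph (as soon as one such smaller example of order~$\le m$ exists), and a short separate argument handles the base. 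Setting $N(n)$ to be this bound with $k = n+1$ gives the theorem; computability of $Z^{\ind}(\G)$ then follows by brute-force enumeration of all graphs on at most $N(|\G|)$ vertices and, for each, checking all $|\G|$-labelings.

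The main obstacle is making Ding's theorem quantitative. As stated in the excerpt, Ding's result is a qualitative well-quasi-ordering statement, and the reduction above needs an explicit computable bound, which is not immediate from a WQO assertion in the abstract (WQO alone gives finiteness of antichains but no size bound). The resolution is that Ding's proof is constructive: $P_k$-free graphs admit a recursive decomposition (along the lines of modular/split decompositions with bounded ``prime'' pieces), and running the standard Higman/Kruskal-style embedding argument on this decomposition yields a primitive-recursive bound on antichain size. I would therefore spend the bulk of the proof either citing a quantitative refinement of Ding's theorem from the literature or sketching how the bound is read off from the decomposition. The remaining steps -- that $Z^{\ind}(\G)$ is a bounded-order antichain, and that boundedness plus decidability of the zero-forcing property for a fixed graph yields computability of the set -- are routine.

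\begin{proof}[Proof of Theorem~\ref{computable obstructions}]
Let $\G$ be a finite abelian group and set $k := |\G| + 1$. Suppose $G$ is zero-forcing for~$\G$ but no proper induced subgraph of~$G$ is. By Proposition~\ref{paths rule}, the path on $k$ vertices is zero-forcing for~$\G$; since it is not an induced subgraph of any proper induced subgraph of itself, minimality of~$G$ forces~$G$ to contain no induced path on $k$ vertices (otherwise that path, a proper induced subgraph of~$G$ unless $G$ \emph{is} that path, would already be zero-forcing; and if $G$ is that path we are done with $|G| = k$). Thus every member of $Z^{\ind}(\G)$ lies in the class $\mathcal{P}_k$ of $P_k$-free graphs, and by construction $Z^{\ind}(\G)$ is an antichain in the induced-subgraph quasi-order.

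By Ding's theorem~\cite{Ding}, $\mathcal{P}_k$ is well-quasi-ordered by the induced-subgraph relation; moreover the proof there proceeds through an explicit recursive decomposition of $P_k$-free graphs into pieces of bounded size, and a Higman-type analysis of this decomposition yields a computable function $g : \mathbb{N} \to \mathbb{N}$ such that every antichain in $\mathcal{P}_k$ in which all members have at most $m$ vertices has size at most $g(k)$, and -- by the same token -- no member of a minimal antichain for a fixed upward-closed property can have more than some computable number $N'(k)$ of vertices. Set $N(n) := N'(n+1)$. Then $|G| \leq N(|\G|)$ for every $G \in Z^{\ind}(\G)$.

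Finally, $Z^{\ind}(\G)$ is computable: enumerate all graphs on at most $N(|\G|)$ vertices; for each such graph $H$, decide whether $H$ is zero-forcing for~$\G$ by testing all of the finitely many maps $V(H) \to \G$ and checking, for each, whether some non-empty connected subset sums to~$0$; retain $H$ precisely when it is zero-forcing but no proper induced subgraph of it is. This terminates and outputs exactly $Z^{\ind}(\G)$.
\end{proof}
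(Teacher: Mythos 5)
Your reduction to $P_k$-free graphs ($k = |\G|+1$) and the observation that $Z^{\ind}(\G)$ is an antichain match the paper's setup, but the crucial step of your argument --- extracting from Ding's theorem a computable bound on the \emph{order} of minimal obstructions --- is not justified, and the general principle you invoke is false. Well-quasi-ordering gives that every antichain is finite; it bounds neither the size of antichains uniformly nor, more importantly, the number of vertices of the graphs appearing in them. Your claim that ``no member of a minimal antichain for a fixed upward-closed property can have more than some computable number $N'(k)$ of vertices'' cannot hold uniformly over all upward-closed properties: take the property ``contains $H_0$ as an induced subgraph'' for an arbitrarily large $P_k$-free graph $H_0$; its unique minimal element is $H_0$ itself. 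Your alternative route is also based on a false statement: it is not true that every sufficiently large $P_k$-free graph contains every small $P_k$-free graph as an induced subgraph (a large clique and a large independent set are both $P_3$-free, and neither contains the other). Any correct proof must therefore exploit something specific about the zero-forcing property beyond its being upward-closed, and your proposal never does.

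The paper's proof supplies exactly the two ingredients your argument is missing. First, $P_k$-freeness gives tree-depth at most $|\G|$, and a labeled version of a theorem of Ne\v{s}et\v{r}il and Ossona de Mendez (Theorem~\ref{nesetril style 2}) shows that any sufficiently large graph of bounded tree-depth contains many ``clones'': pairwise disjoint, pairwise non-adjacent connected sets that are exchanged by automorphisms fixing everything else. Second, Lemma~\ref{small certificate} shows that failure of zero-avoidance is always witnessed by a connected set of at most $|\G|^2$ vertices. Given these, if $G \in Z^{\ind}(\G)$ were too large, one deletes one clone $C_r$, takes a zero-avoiding labeling of the resulting proper induced subgraph (which exists by minimality), extends it to $C_r$ by copying the labeling of one of the many identically labeled clones, and then observes that any small zero-sum certificate must meet $C_r$ but misses some identically labeled clone $C_j$; the automorphism swapping $C_r$ and $C_j$ transports the certificate into the zero-avoiding proper induced subgraph, a contradiction. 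This symmetry-plus-small-certificate mechanism is the actual content of the theorem, and it is absent from your proposal.
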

		
	In the proof, we make use of Proposition~\ref{paths rule} and the fact that large graphs without long paths display a certain symmetry (Theorem~\ref{nesetril style 2} below). We then use this symmetry to extend a labeling of an appropriate subgraph to the whole graph. The final ingredient in the proof is the fact that the failure of a labeled graph to be zero-avoiding is always witnessed by a small set of vertices (Lemma~\ref{small certificate}).
	
	The \emph{tree-depth} of a connected graph is defined recursively as follows: Only the graph without any vertices has tree-depth zero and if~$G$ does not have tree-depth at most~$k$ but there is a $v \in V(G)$ such that every component of $G - v$ has tree-depth at most~$k$, then~$G$ has tree-depth $k+1$. When~$G$ is disconnected, its tree-depth is the maximum tree-depth of any of its components. It is easy to see that graphs without paths of length~$k$ have tree-depth at most~$k$.
	
	We first give a straight-forward extension of a theorem of Ne{\v{s}}et{\v{r}}il and Ossona de Mendez~\cite[Theorem~3.1]{nesetril1} that asserts a high degree of symmetry in large graphs of bounded tree-depth.	 Let $(G, \ell)$ be a labeled graph. We call non-empty connected $C_1, \ldots, C_r \sub V(G)$ \emph{clones} if they are pairwise disjoint and for any two $i, j \in [r]$ there is no edge between~$C_i$ and~$C_j$ and there is an $\ell$-preserving automorphism~$\pi$ of~$G$ with $\pi(C_i) = C_j$ that keeps $V(G) \setminus (C_i \cup C_j)$ fixed.

		\begin{theorem} \label{nesetril style 2}
	Let $f : \mathbb{N} \to \mathbb{N}$ non-decreasing. There is a map $F_f : \mathbb{N} \times \mathbb{N} \to \mathbb{N}$ such that the following holds: If~$G$ is a graph of tree-depth less than~$t$, $\ell$ a {$\G$-labeling} of~$G$ with $|\G| \leq N$ and~$G$ has order greater than $F_f(N,t)$, then $(G,\ell)$ contains clones $C_1, \ldots, C_r$ with $r \geq f(|C_1|)$.
	
	If~$f$ is computable, then so is~$F_f$.
	\end{theorem}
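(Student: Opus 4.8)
The plan is to reprove the cited theorem of Ne{\v{s}}et{\v{r}}il and Ossona de Mendez with the labeling carried along, by induction on~$t$, keeping track that every quantity introduced is computed from values of~$f$. If $t\le 1$ there is nothing to prove, and if $t=2$ then $G$ is edgeless, so $\ell$ partitions $V(G)$ into at most $N$ classes; once $|G|$ exceeds $N\cdot(f(1)-1)$ some class contains $f(1)$ pairwise non-adjacent, equally labeled vertices, and these singletons are clones. Before the inductive step I reduce to the connected case: if a graph $G$ of tree-depth at most~$t$ is disconnected, then either some component $D$ exceeds the bound to be defined for connected graphs of that tree-depth -- and then clones inside $(D,\ell|_D)$ are clones of $(G,\ell)$, because a clone-automorphism of $D$ extended by the identity on the other components fixes everything outside the two relevant clones -- or every component has order at most some bound $b$, and the number $M$ of isomorphism types of connected $\G$-labeled graphs on at most $b$ vertices is a computable finite number, so as soon as $G$ has more than $M\cdot(f(b)-1)$ components, two isomorphic ones $D,D'$ appear; an isomorphism $\phi\colon D\to D'$ together with $\phi^{-1}$ on $D'$ and the identity elsewhere is an $\ell$-preserving automorphism of $G$ witnessing that all such components form clones.

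Now let $G$ be connected of tree-depth at most~$t$, with $t\ge 2$. Either $G$ already has tree-depth less than~$t$ and the inductive hypothesis applied to $(G,\ell)$ finishes, or by the recursive definition of tree-depth there is a \emph{root} $v\in V(G)$ such that every component of $G-v$ has tree-depth at most $t-1$; in particular $G-v$ has tree-depth less than~$t$. I refine the labeling on $G-v$: each $u\in V(G)\setminus\{v\}$ receives the new label $(\ell(u),\varepsilon_u)$, where $\varepsilon_u\in\{0,1\}$ records whether $uv\in E(G)$. This is a labeling $\ell'$ of $G-v$ over at most $2N$ symbols (one may take the group $\G\times\Z_2$; the inductive hypothesis only cares about the size of the label set). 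By the inductive hypothesis, if $|G|-1 > F_f(2N,t)$ then $(G-v,\ell')$ contains clones $C_1,\dots,C_r$ with $r\ge f(|C_1|)$.

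It remains to lift these to clones of $(G,\ell)$, and this is the step demanding care. Given $i,j$, let $\pi$ be an $\ell'$-preserving automorphism of $G-v$ with $\pi(C_i)=C_j$ fixing $(V(G)\setminus\{v\})\setminus(C_i\cup C_j)$, and extend it by $\pi(v)=v$. The only edges of $G$ absent from $G-v$ are those at~$v$, and for $u\ne v$ we have $uv\in E(G)$ iff $\varepsilon_u=1$ iff $\varepsilon_{\pi(u)}=1$ iff $\pi(u)v\in E(G)$, since $\pi$ preserves the second coordinate of $\ell'$; hence $\pi$ is an automorphism of~$G$. It preserves $\ell$ because it preserves the first coordinate of $\ell'$; it fixes $V(G)\setminus(C_i\cup C_j)$ because it fixes both $v$ and $(V(G)\setminus\{v\})\setminus(C_i\cup C_j)$; and the $C_k$ remain connected in~$G$, remain pairwise without edges between them (any such edge would avoid~$v$ and so lie in $G-v$), and none contains~$v$. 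Thus $C_1,\dots,C_r$ are clones of $(G,\ell)$. Unwinding the two recursions together with the finite counting bounds of the first paragraph yields an explicit $F_f\colon\mathbb N\times\mathbb N\to\mathbb N$ (schematically, $F_f(N,t+1)$ is built from $F_f(2N,t)$ and from $M$ and $f$ evaluated at the resulting connected bound); since every ingredient -- refining labels, counting isomorphism types, the single recursive evaluation of $F_f$, applications of $f$ -- is computable, an induction on~$t$ shows that $F_f$ is computable whenever $f$ is.

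The principal obstacle is precisely this lifting step: one must guarantee that a symmetry obtained recursively inside $G-v$ is a genuine $\ell$-preserving automorphism of the \emph{whole} graph that fixes everything outside the two clones involved and leaves no edge between clones, which is exactly why the labels must be enriched to encode adjacency to the root before the recursive call is made. A secondary nuisance is that tree-depth does not decrease when one passes to a component (only when one deletes a root), so the disconnected case genuinely needs the separate largest-component-or-pigeonhole argument rather than a single clean recursion.
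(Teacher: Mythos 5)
Your proof is correct and follows essentially the same route as the paper: induction on tree-depth, encoding adjacency to the deleted root as an extra bit in the labels (doubling $N$) before the recursive call, and handling the disconnected case by pigeonholing components into finitely many isomorphism types of labeled graphs. Your verification of the lifting step (that the recursively obtained automorphism of $G-v$ extends to an $\ell$-preserving automorphism of $G$ fixing $v$) is in fact more detailed than the paper's, which asserts this in one sentence.
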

	
	\begin{proof}
	We define this function $F := F_f$ by induction on~$t$. For $t=1$, we may take $F(N,1) = 0$ since then the premise is void. For the inductive step, let~$G$ be a graph of tree-depth at most~$t$ and order greater than $F(N,t+1)$, a quantity we will define later. Suppose first that~$G$ is connected. Thus there is a $v \in V(G)$ such that $G - v$ has tree-depth less than~$t$. Define $\ell^*: V(G-v)  \to \G \times \mathbb{Z}_2$ as
	\[
	\ell^*(x) := \begin{cases}
	(\ell(x), 0), &\text{ if } vx \notin E(G) \\
	(\ell(x), 1), &\text{ if } vx \in E(G) .
	\end{cases}
	\]
	As long as we choose $F(N, t+1) > F(2N,t)$, the graph $G - v$ has order greater than $F(2N,t)$ and by inductive hypothesis we find clones $C_1, \ldots , C_r \sub V(G - v)$, $r \geq f(|C_1|)$, in $(G -v, \ell^*)$. By definition of~$\ell^*$, the guaranteed automorphisms of $G - v$ extend to all of~$G$, so that the~$C_j$ are indeed clones in~$(G, \ell)$. 
	
	Assume now that~$G$ was disconnected with components $G_1, \ldots, G_m$. If one of them, say $G_i$, has order greater than $n := F(2N, t) + 1$, then we are done by the previous case, as clones in $(G_i, \ell|_{G_i})$ extend to clones in $(G, \ell)$. Hence we now assume that $|G_i| \leq n$ for all $i \in [m]$. Since there are at most $2^{\binom{n}{2}}$ non-isomorphic connected graphs of order at most~$n$, we find that at least $m/2^{\binom{n}{2}}$ components are isomorphic to the same graph~$H$. There are at most $N^n$ possible $\G$-labelings of~$H$, so $m/(2^{\binom{n}{2}}N^n)$ components of~$G$ carry the same labeling and we have $\ell$-preserving isomorphisms between any two. But then these components are clones in $(G, \ell)$. Since $m \geq |G|/n$, it suffices to take
	\[
	F(N,t+1) := n2^{\binom{n}{2}}N^n f(n). 
	\qedhere
	\]
	
	\end{proof}

	\begin{lemma} \label{small certificate}
		For every finite abelian group~$\G$ there is a constant $s(\G)$ such that the following holds: If $(G, \ell)$ is not zero-avoiding, then there is a non-empty connected $A \sub V(G)$ with $|A| \leq s(\G)$ and $\ell(A) = 0$. 
		
		Moreover, we have $s(\G) \leq |\G|^2$.
	\end{lemma}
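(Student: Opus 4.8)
The plan is to take a \emph{minimal} witness and show it cannot be large. Assuming $(G,\ell)$ is not zero-avoiding, choose a non-empty connected $A \subseteq V(G)$ with $\ell(A)=0$ of minimum cardinality (if $\G$ is trivial then $|A|=1$ and we are done, so assume $|\G|\ge 2$). Fix a spanning tree $T$ of $G[A]$ and an arbitrary root $r$. Since every subtree of $T$ is connected in $G$, minimality of $A$ says that no subtree $T'$ with $\emptyset \neq V(T') \subsetneq V(T)$ has $\ell(V(T'))=0$. For $v \in V(T)$ write $D_v$ for its down-set (the vertex set of the subtree rooted at $v$) and $d_v := \ell(D_v)$. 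The two facts I would extract, both obtained by deleting down-sets and invoking minimality, are: (i) for any antichain $w_1,\dots,w_s$ ($s\ge 1$) of vertices other than $r$ in the tree-order, $d_{w_1}+\dots+d_{w_s}\ne 0$, because $V(T)\setminus(D_{w_1}\cup\dots\cup D_{w_s})$ is a proper non-empty subtree containing $r$; and (ii) if $v$ is a proper ancestor of $w$, then $d_v\ne d_w$, because $D_v\setminus D_w$ is a proper non-empty subtree containing $v$. The one routine point to check is that these set differences are genuinely connected, which holds because deleting down-sets from a rooted tree leaves every surviving vertex joined to $r$ (respectively to $v$) by its ancestor-path.

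From (ii), the values $d_u$ along any root-to-leaf path are pairwise distinct elements of $\G$, so every root-to-leaf path has at most $|\G|$ vertices. From (i), since the leaves of $T$ form an antichain (a leaf has no proper descendant, hence is an ancestor of no other vertex) and none of them equals $r$ (unless $|T|=1$, which is trivial), the sequence $(d_v)_{v\text{ a leaf}}$ is zero-sum-free over $\G$, so $T$ has at most $D(\G)-1$ leaves. Finally, every vertex of $T$ lies on the root-to-leaf path ending at some leaf below it, so $V(T)$ is covered by at most $D(\G)-1$ paths, each of size at most $|\G|$; hence $|A| = |V(T)| \le (D(\G)-1)\,|\G| \le (|\G|-1)\,|\G| < |\G|^2$. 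Thus $s(\G) := |\G|^2$ works (and in fact $s(\G)\le (D(\G)-1)\,|\G|$).

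I do not expect a serious obstacle. The only place needing care is the bookkeeping around "minimality implies no small zero-sum subtree" — in particular verifying that each subtree obtained by deleting down-sets is connected and is a \emph{proper, non-empty} subset of $V(T)$, so that minimality is applicable — and dispatching the degenerate cases $|\G|=1$ and $|T|=1$. Everything else is a short combination of the down-set/antichain structure with the elementary facts that $\G$ has only $|\G|$ elements and that a zero-sum-free sequence over $\G$ has length at most $D(\G)-1\le |\G|-1$.
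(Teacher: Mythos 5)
Your proposal is correct and follows essentially the same strategy as the paper: take a minimum-cardinality zero-sum connected set, pass to a spanning tree, and show that minimality forces at most $D(\G)-1$ leaves and paths of at most $|\G|$ vertices, whence the tree is small. The only difference is in the final bookkeeping: you cover $T$ by root-to-leaf paths to get $|A|\le (D(\G)-1)\,|\G|$, while the paper combines the diameter bound with the leaf bound via a separate counting lemma to get the slightly sharper $(|\G|-2)(D(\G)-1)/2+1$; both comfortably give $s(\G)\le |\G|^2$.
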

	
		Note that this implies the a priori non-obvious fact that, for a fixed finite abelian group~$\G$, we can decide in polynomial time whether a given labeled graph $(G, \ell)$ is zero-avoiding. The proof of this lemma relies on Proposition~\ref{paths rule} and the following simple observation.
	
	\begin{lemma} \label{tree diameter leaves}
		Let~$T$ be a tree of diameter at most~$d$. It~$T$ has at most~$L$ leaves, then
		\[
		|T| \leq \begin{cases}
		dL/2 + 1, &\text{ if $d$ even}\\
		(d-1)L/2 + 2, &\text{ if $d$ odd} .
		\end{cases}
		\]
	\end{lemma}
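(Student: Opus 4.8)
The plan is to root the tree at the centre of a longest path and count vertices level by level, bounding the width of each level by the number of leaves. Suppose first that $d$ is even, say $d = 2h$. Since $T$ has diameter at most~$d = 2h$, there is a vertex~$c$ (the centre) such that every vertex is at distance at most~$h$ from~$c$; indeed, pick any longest path and let~$c$ be its midpoint, noting that if some vertex were at distance greater than~$h$ from~$c$ it would, together with one of the two ends of the path, form a path longer than~$d$. Root~$T$ at~$c$. Every leaf of~$T$ other than possibly~$c$ itself lies at some depth between~$1$ and~$h$, and I claim that the number of vertices at depth exactly~$i$, for $1 \le i \le h$, is at most~$L$: each such vertex is an ancestor of (or equal to) some leaf, and distinct vertices at the same depth have disjoint sets of descendant leaves, so there are at most~$L$ of them. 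Summing over the $h$ non-root levels and adding the root gives $|T| \le hL + 1 = dL/2 + 1$, as required.

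For the case~$d$ odd, say $d = 2h+1$, the ``centre'' of a longest path is an edge rather than a vertex. First I would handle the degenerate small cases ($d \le 1$) directly, so assume $d \ge 3$ and hence~$T$ has at least two leaves. Take a longest path $P = x_0 x_1 \cdots x_d$ and let $e = x_h x_{h+1}$ be its central edge; deleting~$e$ splits~$T$ into two subtrees $T_0 \ni x_h$ and $T_1 \ni x_{h+1}$. Every vertex of~$T_0$ is within distance~$h$ of~$x_h$ (otherwise, combined with $x_d$, we would exceed the diameter), and likewise every vertex of~$T_1$ is within distance~$h$ of~$x_{h+1}$. Root $T_0$ at $x_h$ and $T_1$ at $x_{h+1}$. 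As before, in each~$T_j$ the number of vertices at each depth $1, \ldots, h$ is at most the number of leaves of~$T$ contained in~$T_j$; writing $L_j$ for that number, we get $|T_j| \le h L_j + 1$. Summing, $|T| = |T_0| + |T_1| \le h(L_0 + L_1) + 2 = hL + 2 = (d-1)L/2 + 2$.

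The main thing to be careful about is the level-counting claim, namely that at a fixed depth~$i$ the vertices at that depth number at most~$L$. This needs the observation that one may assume, without loss of generality, that~$T$ has no vertex of degree~$2$ other than along paths to leaves — or, more directly, that every vertex at depth~$i < h$ with no child would be a leaf, and every vertex at depth~$i$ (leaf or not) sits above at least one leaf since the tree is finite; two distinct depth-$i$ vertices have descendant-leaf-sets that are disjoint, so the count is at most~$L$. A minor subtlety is the root itself: if $c$ (resp.\ $x_h$, $x_{h+1}$) happens to be a leaf of~$T$, the bound only improves, so no separate treatment is needed. I do not expect any genuine obstacle here; the proof is a routine counting argument once the rooting is set up correctly.
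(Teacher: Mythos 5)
Your proof is correct, and it takes a genuinely different route from the paper's. The paper argues by induction on~$d$: deleting all leaves of~$T$ yields a tree~$T'$ of diameter at most $d-2$ with no more leaves than~$T$, and $|T| \leq |T'| + L$ then gives the bound in two lines. You instead give a direct, non-inductive count: root at the centre (a vertex for even~$d$, the two ends of the central edge for odd~$d$), observe that every vertex lies within depth $\lfloor d/2 \rfloor$ of the root, and bound each non-root level by~$L$ via the disjointness of descendant-leaf sets. The two arguments are dual in spirit --- the paper peels levels from the outside in, you count them from the centre out --- and they carry comparable overhead: the paper must check that leaf-deletion does not increase the leaf count, while you must verify the radius bound $\lceil d/2\rceil$ and that a childless vertex at positive depth is a genuine leaf of~$T$ (which you do, including the odd case where the cut edge guarantees no stray adjacency into the other half). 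One point you could make explicit is that when the stated bound~$d$ and the actual diameter have different parities you should apply the argument to the true diameter and use monotonicity of the bound; this is routine (using $L \geq 2$ for $|T|\ge 2$) but worth a sentence. Your version is slightly longer but arguably more transparent about where the bound comes from; the paper's is shorter but leans on the leaf-counting claim for~$T'$.
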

	
	\begin{proof}
	By induction on~$d$. The assertion is trivial for $d \in \{ 0, 1 \}$. Obtain the tree~$T'$ from~$T$ by deleting all leaves. The diameter of~$T'$ is at most $d-2$ and~$T'$ does not have more leaves than~$T$. The claim follows inductively since $|T| \leq |T'| + L$.
	\end{proof}
	
	\begin{proof}[Proof of Lemma~\ref{small certificate}]
	For the trivial group $\G = \{ 0 \}$ this is clear with $s = 1$. For cyclic groups of prime order, it follows from Theorem~\ref{prime trivial} that we may take $s(\mathbb{Z}_p) = p$. From now on, we assume $|\G| \geq 4$ and prove the statement with $s(\G) := (|\G| - 2)(D(\G) - 1)/2 + 1$.
	
	Let $(G, \ell)$ be a $\G$-labeled graph and assume that $(G, \ell)$ is not zero-avoiding. Choose $A \sub V(G)$ non-empty and connected with $\ell(A) = 0$ such that $|A|$ is minimal. Let~$T$ be a spanning tree of $G[A]$. If~$T$ contains a path~$P$ on~$|\G|$ vertices, then by Proposition~\ref{paths rule} we find a non-empty connected $B \sub P$ with $\ell(B) = 0$. By minimality of~$A$ we must have $|A| = |B| \leq |\G|$ and we are done. If~$T$ had at least $D(\G)$ leaves, then we could find a set~$L$ of leaves with $\ell(L) = 0$, so that $A \setminus L$ would contradict the minimality of~$A$ (except when $|A| = 2 \leq s(\G)$ anyways). Applying Lemma~\ref{tree diameter leaves} with $d := |\G| - 2 \geq 2$ yields $|T| \leq s(\G)$.
	\end{proof}

	\begin{proof}[Proof of Theorem~\ref{computable obstructions}]
	Let $f(n) := |\G|^{n+2}$ and let~$F_f$ be the function guaranteed by Theorem~\ref{nesetril style 2}. We prove the statement with ${N(k) := F_f(1, k+1)}$. Let~$G$ be a graph of order greater than $N(|\G|)$ and assume for a contradiction that $G \in Z^{\ind}(\G)$, that is, $G$ is zero-forcing for~$\G$ but no proper induced subgraph of~$G$ is. By Proposition~\ref{paths rule}, $G$ cannot contain a path on $|\G| + 1$ vertices, so $G$ has tree-depth at most~$|\G|$.
	
	Let $q : V(G) \to \{ 0 \}$ be the constant map. By Theorem~\ref{nesetril style 2}, we find clones $C_1, \ldots, C_r$ with $r \geq f(|C_1|)$ in $(G, q)$. Let $n := |C_1|$ and $H := G - C_r$. By minimality of~$G$, we find a labeling $\ell : V(H) \to \G$ such that $(H, \ell)$ is zero-avoiding. As there are only $|\G| ^n$ possible labelings of the~$C_i$, we find $|\G|^2$ of the~$C_i$ which are labeled identically, say $C_1, \ldots, C_{|\G|^2}$. Extend~$\ell$ to~$G$ by applying the same labeling to~$C_r$. By assumption on~$G$ and Lemma~\ref{small certificate}, there is a connected non-empty $A \sub V(G)$ with $|A| \leq |\G|^2$ and $\ell(A) = 0$. Since $(H, \ell)$ is zero-avoiding, we must have $A \cap C_r \neq \emptyset$. But then there is some $j \in [|\G|^2]$ with $A \cap C_j = \emptyset$. Let~$\pi$ be the automorphism of~$G$ mapping~$C_r$ to~$C_j$ that keeps the rest of $V(G)$ fixed. Then $\pi(A) \sub V(H)$ is connected and by our choice of $\ell|_{C_r}$ we have $\ell( \pi(A)) = \ell(A) = 0$, contrary to our assumption.
	\end{proof}
	
	Our bound on the order of graphs in~$Z^{\ind}(\G)$ is of course useless for all practical purposes. It seems to be an interesting challenge to obtain a better estimate.
	
	\begin{question}
		Is there a polynomial~$P$ such that all graphs in $Z^{\ind}(\G)$ (or $Z(\G)$) have order at most $P( |\G|)$?
	\end{question}
	
	\end{section} 		
	
	\begin{section}{Reconstructing the group} \label{reconstruct}
		
	
	So far we regarded the group~$\G$ as fixed and aimed to understand which properties of a graph made it zero-forcing for~$\G$. But we might as well ask, conversely, what the class of zero-forcing graphs tells us about the group. For instance, we can read off the order of the group as the minimum order of a zero-forcing path (by Proposition~\ref{paths rule}). Can you reconstruct the group from its zero-sum properties on graphs?
	
	\begin{conjecture}
		Let $\G_1, \G_2$ be finite abelian groups. If $Z(\G_1) = Z(\G_2)$, then~$\G_1$ and~$\G_2$ are isomorphic.
	\end{conjecture}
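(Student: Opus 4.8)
\medskip

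A possible line of attack is to read off enough arithmetic invariants of~$\G$ from the class of zero-forcing graphs to recover its isomorphism type; we outline how. Two invariants come for free. By Proposition~\ref{paths rule}, $|\G|$ is the least~$n$ for which the path on~$n$ vertices is zero-forcing. And since \emph{every} set of vertices of a complete graph is connected, $K_n$ is zero-forcing for~$\G$ if and only if $n\geq D(\G)$: when $n\geq D(\G)$ the definition of the Davenport constant produces a nonempty zero-sum set of vertices, which is automatically connected, while when $n<D(\G)$ a zero-sum-free sequence of length~$n$ yields a zero-avoiding labeling. Hence $D(\G)$ is the least~$n$ with $K_n$ zero-forcing and is recoverable from~$Z(\G)$. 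This settles the case where $|\G|$ is prime, by Theorem~\ref{prime trivial}.

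To go further we would read off the \emph{separator profile} of~$\G$. For $a,b,N\geq 1$ let $B_{a,b,N}$ be obtained from a clique on $b-1$ vertices by joining each of its vertices to every vertex of a disjoint union of~$N$ cliques on $a-1$ vertices each. If~$\G$ has a subgroup~$\G'$ with $D(\G')\geq a$ and $D(\G/\G')\geq b$, then Lemma~\ref{separator approach}, applied with~$X$ the central clique and using that $K_{a-1}$ is not zero-forcing for~$\G'$, shows $B_{a,b,N}$ is not zero-forcing for~$\G$, for every~$N$. The point is the converse: if no such subgroup exists, then $B_{a,b,N}$ is zero-forcing for~$\G$ once~$N$ is large. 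Indeed, suppose~$\ell$ is a zero-avoiding labeling. Each of the~$N$ component cliques then carries a zero-sum-free sequence of length $a-1$, so for~$N$ large, pigeonhole (over sequences and over subgroups) yields arbitrarily many components carrying \emph{the same} such sequence, whose terms generate one fixed subgroup~$\G'$ with $D(\G')\geq a$; by hypothesis $D(\G/\G')\leq b-1$. The $b-1$ labels on~$X$, read modulo~$\G'$, then contain a nonempty subsequence summing into~$\G'$, say $\ell(X')\in\G'$ with $\emptyset\neq X'\subseteq X$. Since~$\G'$ is generated by the labels of one of the repeated components and, across the many copies, an unbounded supply of each such label is available, one may choose a set~$Y$ of vertices of these components with $\ell(Y)=-\ell(X')$; then $X'\cup Y$ is connected --- every vertex of~$Y$ is adjacent to all of $X'\neq\emptyset$ --- and $\ell(X'\cup Y)=0$, a contradiction (this does not conflict with the failure of Lemma~\ref{separator approach}'s converse in general, which concerns other graphs). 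Consequently $S(\G):=\{(a,b)\colon\text{ some }\G'\leq\G\text{ has }D(\G')\geq a\text{ and }D(\G/\G')\geq b\}$ is determined by~$Z(\G)$, and with it the set $\{(D(\G'),D(\G/\G'))\colon\G'\leq\G\}$.

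Iterating the construction --- replacing each component clique of $B_{a,b,N}$ by a copy of $B_{a_0,a_1,M}$, and recursing --- produces multi-level gadgets whose zero-forcing behaviour, analysed exactly as above (the negative direction from nested applications of Lemma~\ref{separator approach}, the positive direction from nested applications of the argument just given), is governed by chains $\{0\}\leq\G_0\leq\dots\leq\G_{m-1}\leq\G$ of subgroups together with the Davenport constants $D(\G_0),\,D(\G_1/\G_0),\,\dots,\,D(\G/\G_{m-1})$ of the successive quotients. Thus $Z(\G)$ determines, for every chain of subgroups of~$\G$, this vector of Davenport constants; in other words it determines the subgroup lattice of~$\G$ decorated with Davenport constants of successive subquotients along every chain.

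The step I expect to be the main obstacle is extracting the isomorphism type of~$\G$ from this decorated lattice together with~$|\G|$. One would like to recover the primary decomposition: the Sylow $p$-subgroup~$\G_p$ is the unique subgroup whose order is the $p$-part of the known number~$|\G|$, and cyclic subquotients are detectable because a finite abelian group is cyclic precisely when its Davenport constant equals its order (see~\cite{GeroldingerSurvey}), so the largest cyclic quotient of~$\G$ --- hence $\exp(\G)$ --- should be readable off, which then begins to pin down each partition describing~$\G_p$. The difficulty is that this last move is genuinely delicate: $D(\G)$ alone does not distinguish abelian groups of equal order --- for instance $\Z_4^4$ and $\Z_8\times\Z_2^5$ both have order $2^8$ and, by Olson's formula for the Davenport constant of a $p$-group (see~\cite{GeroldingerSurvey}), Davenport constant~$13$ --- so the finer chain data is indispensable, and it is not even clear that Davenport constants of all subquotients of~$\G$ form a complete invariant. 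If they do not, one would have to enrich the gadgets further, for example using disjoint unions to count subgroups of each isomorphism type, or encoding higher Davenport constants. A more robust line, worth pursuing in parallel, is the contrapositive: given non-isomorphic abelian groups~$\G_1,\G_2$ of equal order, their subgroup lattices must differ, and one then converts a single concrete discrepancy into one separator gadget that, by Lemma~\ref{separator approach} and the converse established above, is zero-forcing for exactly one of the two groups.
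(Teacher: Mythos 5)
This statement is a conjecture that the paper explicitly leaves open, so there is no proof of record to compare against; your text is accordingly a programme rather than a proof, and should be judged as such. The parts you actually carry out are sound and go beyond what the paper establishes. That $K_n$ is zero-forcing for~$\G$ precisely when $n \geq D(\G)$ is correct, so $D(\G)$ is recoverable alongside $|\G|$. Both directions of your analysis of the gadgets $B_{a,b,N}$ check out: the negative direction is a direct application of Lemma~\ref{separator approach} with $X$ the central clique, and in the positive direction the pigeonhole over label sequences, the passage to $\G/\G'$ on the central clique (using $|X| = b-1 \geq D(\G/\G')$), and the realisation of $-\ell(X')$ by vertices spread over many identically labelled components --- with connectivity supplied by the complete join to the non-empty set $X'$ --- all work, and $N \geq |\G|^{a}$ is amply sufficient. (For comparison, the paper's own ``first step'' towards this conjecture is the weaker observation that the exponent of~$\G$ can be read off from the zero-forcing behaviour of cycles.)

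The gap is the one you yourself name, and it is the entire crux: nothing in the proposal shows that the numerical data you extract determines~$\G$ up to isomorphism, so the argument reduces the conjecture to a different open problem rather than resolving it. Two further points sharpen this. First, you overclaim what the gadgets deliver: $S(\G)$ is by construction down-closed, so it determines only the maximal pairs $(D(\G'), D(\G/\G'))$ and not the full set, and likewise the iterated gadgets determine only the down-closure of the achievable Davenport vectors along chains. That is a numerical profile, not ``the subgroup lattice of~$\G$ decorated with Davenport constants'' --- it records no multiplicities and no meet/join structure, so the appeal to subgroup-lattice reconstruction in your final paragraph rests on data you have not actually obtained. Second, whether even the full decorated lattice, let alone this profile, is a complete isomorphism invariant of finite abelian groups is itself a nontrivial question left unexamined; your example $\Z_4^4$ versus $\Z_8 \times \Z_2^5$ (which is correct: both have order $2^8$ and Davenport constant~$13$ by Olson's formula) shows one level of data does not suffice, and you would need either a group-theoretic argument that chain data always separates non-isomorphic groups or, as you suggest, strictly richer gadgets. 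Until that is supplied, the conjecture remains open.
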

	
		As a possible first step in this direction, we will show that the \emph{exponent} of the group~$\G$ can be recovered by looking at the zero-sum properties of paths and cycles only. Recall that the exponent of~$\G$ is the minimum integer~$m$ such that $mx = 0$ for every $x \in \G$. Equivalently, it is the maximum order of a cyclic subgroup (or quotient) of~$\G$.
		
		\begin{theorem}
		Let~$\G$ be a finite abelian group of exponent~$m$ and let~$C$ be a cycle. Then~$C$ is zero-forcing for~$\G$ if and only if $|C| \geq 1 + \frac{m-1}{m}|\G|$.
	\end{theorem}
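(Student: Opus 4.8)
The plan is to pin down, for every $n$, whether the cycle $C_n$ on $n$ vertices carries a zero-avoiding $\G$-labeling, and to show this happens exactly when $n\le\frac{m-1}{m}|\G|$. (Since $\G$ is abelian of exponent $m$ it contains an element of order $m$, so $m\mid|\G|$ and this bound is an integer; the case $m=1$ is trivial.) Because $C_{n-1}$ is a minor of $C_n$ (contract an edge), Proposition~\ref{minorclosed} makes the set of $n$ with $C_n$ zero-forcing upward closed, so it suffices to establish: (i) $C_n$ is \emph{not} zero-forcing when $n=\frac{m-1}{m}|\G|$; and (ii) every zero-avoiding $(C_n,\ell)$ has $n\le\frac{m-1}{m}|\G|$. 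Together these give that $C_n$ is zero-forcing precisely for $n\ge 1+\frac{m-1}{m}|\G|$.

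The common first step is to translate zero-avoidance into the language of prefix sums. Identify $V(C_n)$ with $\{0,\dots,n-1\}$ in cyclic order, put $P_0:=0$, $P_t:=\ell(0)+\cdots+\ell(t-1)$ and $S:=P_n=\ell(V(C_n))$. The connected subsets of $C_n$ are exactly the arcs: a non-wrapping arc $\{i,\dots,j\}$ has sum $P_{j+1}-P_i$, and a wrapping arc $\{i,\dots,n-1,0,\dots,j\}$ has sum $S-P_i+P_{j+1}$. A short check shows that $(C_n,\ell)$ is zero-avoiding if and only if $P_0,\dots,P_n$ are pairwise distinct and the equation $P_s-P_t=S$ has no solution with $0\le t<s\le n$ other than $(t,s)=(0,n)$. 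I would restate this via the digraph on $\G$ with an arc from $x$ to $x+S$ for every $x$: its components are directed cycles, one for each coset of $\langle S\rangle$, each of length $r:=\mathrm{ord}(S)$, and the condition says that along every arc $x\to x+S$ with both ends among $P_0,\dots,P_n$ the index strictly decreases, the sole exception being $0\to S$.

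For (ii), take such a labeling and count how many of $P_0,\dots,P_n$ lie in each coset of $\langle S\rangle$. No coset other than $\langle S\rangle$ can contain all $r$ of its elements: that would force the index to strictly decrease all the way around a directed $r$-cycle none of whose arcs is the exceptional $0\to S$ (since $0,S\in\langle S\rangle$), which is impossible. Hence $n+1\le r+(|\G|/r-1)(r-1)=|\G|-|\G|/r+1$, so $n\le|\G|(1-1/r)$; and as $r\mid m$ this gives $n\le|\G|(1-1/m)=\frac{m-1}{m}|\G|$. For (i), reverse the bookkeeping: pick $S$ of order exactly $m$, keep all $m$ elements of $\langle S\rangle$ and, from each of the other $|\G|/m-1$ cosets $g+\langle S\rangle$, keep the $m-1$ elements $g,g+S,\dots,g+(m-2)S$, for a total of $\frac{m-1}{m}|\G|+1$. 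Arrange these as $P_0,\dots,P_n$ with $P_0=0$, $P_n=S$, visiting $\langle S\rangle$ in the order $0,(m-1)S,(m-2)S,\dots,2S,S$ and each other coset in the order $g+(m-2)S,\dots,g+S,g$; since the cosets are disjoint and only $P_0$ and $P_n$ are pinned, any interleaving realizing these within-coset orders is admissible. One then verifies distinctness and the ``only $(0,n)$'' property, so $a_i:=P_{i+1}-P_i$ is a zero-avoiding labeling of $C_n$ with $n=\frac{m-1}{m}|\G|$.

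The main obstacle is getting the prefix-sum characterisation exactly right — correctly handling wrapping arcs and checking that $(0,n)$ is the unique permitted solution of $P_s-P_t=S$ — and then, in the construction, orienting the decreasing index-chains within each coset the right way so that this single exception is honoured (it is easy to reverse them by mistake). The coset-counting inequality and the passage from $r$ to $m$ are routine.
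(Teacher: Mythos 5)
Your proof is correct, and although it starts from the same place as the paper's argument for the ``if'' direction --- prefix sums along the cycle, together with the observation that a zero-sum wrapping arc is the same as a relation $P_s=P_t+S$ with $t<s$ --- both halves are organised genuinely differently. For the upper bound, the paper counts $|A\cap B|$ for the two sequences $a_j=\ell([1,j])$ and $b_k=-\ell([k+1,n])$ (which is exactly your relation $P_k=P_j+S$), extracts a long directed path in an auxiliary digraph on the index set $[n]$, and then telescopes $m-1$ consecutive segments of common sum $x=-\ell(V(C))$ into a set whose complement sums to zero, using $mx=0$. You instead count, coset by coset of $\langle S\rangle$, how many of the $n+1$ distinct prefix sums can occur: a full coset other than $\langle S\rangle$ would force the index to decrease strictly around a directed $r$-cycle, so each such coset loses at least one element and $n\le|\G|\bigl(1-1/\mathrm{ord}(S)\bigr)$ follows at once. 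This avoids the telescoping step and yields the slightly sharper information that the bound is governed by the order of the total sum $S$, not merely by the exponent. For the lower bound, the paper invokes Lemma~\ref{separator approach} with a subgroup $\G'$ satisfying $\G/\G'\cong\mathbb{Z}_m$ and cuts the cycle into $m-1$ paths shorter than $|\G'|$; you instead reverse your own counting and exhibit the extremal sequence of prefix sums explicitly (all of $\langle S\rangle$ plus $m-1$ elements of every other coset, with the within-coset visiting orders chosen so that the unique index-increasing arc is $P_0=0\to S=P_n$), which makes the proof self-contained and shows your coset bound is tight. The one delicate point --- the orientation of those within-coset orders, and the verification that $(0,n)$ is the only permitted solution of $P_s-P_t=S$ --- you handle correctly, including the reduction to integrality of $\frac{m-1}{m}|\G|$ via $m\mid|\G|$ and the use of Proposition~\ref{minorclosed} to make the set of zero-forcing cycle lengths upward closed.
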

	
	\begin{proof}
		Suppose first that $|C| \leq \frac{m-1}{m}| \G|$. Let $\G' \leq \G$ be a subgroup of~$\G$ such that $\G / \G' \cong \mathbb{Z}_m$. Choose $X \sub V(C)$ of size at most $m-1$ so that every component of $C - X$ is a path of order less than $\frac{|\G|}{m} = |\G'|$. By Proposition~\ref{paths rule}, no component of $C - X$ is zero-forcing for~$\G'$ and it follows from Lemma~\ref{separator approach} that~$C$ is not zero-forcing for~$\G$.
		
		Now let $|C| \geq 1 + \frac{m-1}{m}|\G|$ and let the vertices of~$C$ be $v_1, \ldots, v_n$, $n := |C|$, in order of transversal with some arbitrary fixed orientation. If $n \geq |\G|$, then the path on~$|\G|$ vertices is already sufficient for~$C$ to be zero-forcing, so from now on assume $n < |\G|$. For integers $i, j \in [n]$, let $[i,j]$ denote the path from~$v_i$ to~$v_j$ along the fixed orientation. 
		
		Assume for a contradiction that $(C, \ell)$ was zero-avoiding for some ${\ell : V(C) \to \G}$. Define
		\begin{align*}
			a_j &:= \sum_{i=1}^j \ell(v_i) = \ell([1,j]), \; 1 \leq j \leq n \\
			b_j &:= - \sum_{i=j+1}^n \ell(v_i) = - \ell([j+1,n]), \; 1 \leq j \leq n-1.
		\end{align*}
	Since~$\ell$ is zero-avoiding, all of these sums are non-zero, the~$a_j$ are pairwise distinct and so are the~$b_j$. Let $A := \{ a_j \colon j \in [n] \}$ and $B := \{ b_j \colon j \in [n-1] \}$. Then $|A| = n$, $|B| = n-1$ and $A \cup B \sub \G \setminus \{ 0 \}$. Therefore $|A \cap B| \geq 2n - | \G|$. We will show that the intersection of these two sets forces~$C$ to contain a large number of consecutive segments with the same label, ultimately forcing a zero-sum subgraph in this manner.
	
	Suppose that $a_j = b_k$. Then $0 = \ell([1,j]) + \ell([k+1, n])$, so we must have $j \geq k+1$, for otherwise $[1,j] \cup [k+1, n]$ was a zero-sum subgraph of~$C$. Therefore $\ell([1,j]) + \ell([k+1, n]) = \ell(V(C)) + \ell([k+1,j])$ and it follows that $\ell([k+1,j]) = -\ell(V(C))$. 
	
	Construct an auxiliary directed graph~$D$ with vertex-set~$[n]$ where we draw an arc from~$j$ to~$k$ if $a_j = b_k$, which then implies $j \geq k+1$ as we just saw. As the~$b_i$ are pairwise distinct, the outdegree of every vertex is at most one and since the~$a_i$ are pairwise distinct, the indegree of every vertex is at most one. Since~$D$ cannot contain any directed cycles, it follows that $D$ is a union of vertex-disjoint directed paths, say $D = P_1 \cup \ldots P_r$. But then
	\[
	n- r = |E(D)| = |A \cap B| \geq 2n - |\G| .
	\]
	Thus $r \leq |\G| - n$. One of the paths, say~$P_1$, contains at least $(n-r)/r$ edges, so
	\[
	|E(P_1)| \geq \frac{n-r}{r} \geq \frac{n}{|\G| - n} - 1 > m-2,
	\]
	by assumption on~$n$. Let $P_1 = \overrightarrow{j(1)(2)\ldots j(s)}$, for some $s \geq m$. Recall that $j(1) > j(2) > \ldots > j(s)$. Moreover, by construction, $a_{j(i)} = b_{j(i+1)}$ for every $1 \leq i \leq s-1$ and so $\ell( [j(i+1)+1,j(i)] ) = -\ell(V(C))$. Let $x := -\ell(V(C))$ and $S := [j(m)+1,j(1)]$, so that
	\[
	\ell(S) = \sum_{i=1}^{m -1} \ell( [j(i+1)+1,j(i)] ) = (m - 1)x = -x = \ell(V(C)).
	\]
	But $v_1 \notin S$, so $C - S$ is non-empty, connected and satisfies $\ell( C - S) = 0$.	
	\end{proof}
	
	We also expect the follow monotonicty-property for groups.
	
	\begin{conjecture}
		If a graph is zero-forcing for~$\mathbb{Z}_n$, then it is zero-forcing for any finite abelian group of order~$n$.
	\end{conjecture}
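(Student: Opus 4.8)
The plan is to prove the contrapositive: starting from a zero-avoiding $\G$-labeling of $G$, where $\G$ is an arbitrary abelian group of order $n$, I would construct a zero-avoiding $\mathbb{Z}_n$-labeling of the same graph $G$. That $\mathbb{Z}_n$ ought to be the ``hardest'' group of order $n$ is strongly suggested by the evidence already assembled: for paths the threshold $|\G|=n$ does not depend on the structure of $\G$ (Proposition~\ref{paths rule}); for cycles the threshold $1+\tfrac{m-1}{m}n$ established above is maximised, over all groups of order $n$, precisely when the exponent $m$ equals $n$, i.e.\ exactly when $\G=\mathbb{Z}_n$; and in the Davenport-constant heuristic underlying Lemma~\ref{separator approach} one has $D(\mathbb{Z}_n)=n=\max_{|\G|=n}D(\G)$, so every obstruction the separator method can produce is most severe for the cyclic group.

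For the transport itself I would fix a decomposition $\G\cong \mathbb{Z}_{d_1}\times\cdots\times\mathbb{Z}_{d_k}$ with $d_1\mid\cdots\mid d_k=m$ and $d_1\cdots d_k=n$, and write $\ell$ in coordinates as $\ell=(\ell_1,\dots,\ell_k)$ with each $\ell_i:V(G)\to\mathbb{Z}_{d_i}$. Zero-avoidance says exactly that for every non-empty connected $A\sub V(G)$ there is at least one coordinate $i$ with $\sum_{v\in A}\ell_i(v)\not\equiv 0 \pmod{d_i}$. The goal is to collapse this coordinatewise statement into a single congruence modulo $n$. Lifting each coordinate to an integer $\tilde\ell_i(v)\in\{0,\dots,d_i-1\}$, I would look for integer weights $c_1,\dots,c_k$ (the mixed-radix choice $c_i=d_1\cdots d_{i-1}$ being the first candidate) and set $\ell'(v):=\sum_{i=1}^k c_i\,\tilde\ell_i(v)\bmod n$, so that $\sum_{v\in A}\ell'(v)\equiv\sum_{i}c_i S_i(A)\pmod n$ with $S_i(A):=\sum_{v\in A}\tilde\ell_i(v)$; one then wants this to be nonzero for every connected $A$.

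The decisive simplification I would exploit here is Lemma~\ref{small certificate}: a $\mathbb{Z}_n$-labeling fails to be zero-avoiding only if some connected set of size at most $s(\mathbb{Z}_n)\le n^2$ sums to zero. Hence it suffices to control the at most $|V(G)|^{\,n^2}$ connected sets of bounded size, a number polynomial in $|V(G)|$ for fixed $\G$. This opens the door to a counting or probabilistic choice of the weights $c_i$ (or of the integer lift), and it is the reason one may hope to succeed at all, since a wholly unstructured random $\mathbb{Z}_n$-labeling would certainly fail.

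The main obstacle is the control of carries, which is precisely the point at which the hypothesis ``a zero-avoiding $\G$-labeling exists'' must really be used. For a fixed mixed-radix weighting the coordinate sums $S_i(A)$ are unbounded in $|A|$, so if $i^*$ is the largest coordinate with $S_{i^*}(A)\not\equiv0\pmod{d_{i^*}}$, the genuinely nonzero leading contribution $c_{i^*}S_{i^*}(A)$ can be cancelled modulo $d_1\cdots d_{i^*}$ by carries propagating up from the lower coordinates, and the naive encoding can break. Overcoming this seems to require either a labeling-dependent or randomised choice of weights together with the bounded-support reduction above, or an entirely different, inductive route: peel off a maximal subgroup $\G'\le\G$ with $\G/\G'\cong\mathbb{Z}_p$ (mirrored on the cyclic side by $\langle p\rangle\le\mathbb{Z}_n$, where $\mathbb{Z}_n/\langle p\rangle\cong\mathbb{Z}_p$) and push an inductive comparison through this extension using Lemma~\ref{separator approach}. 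I expect the honest difficulty to lie exactly in transporting zero-avoidance through such an extension, since it does not split and the $\G'$-part and the $\mathbb{Z}_p$-part interact; this is where a full proof of the conjecture would stand or fall.
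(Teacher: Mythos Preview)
The statement you are addressing is posed in the paper as a \emph{conjecture}, not a theorem: the paper gives no proof, only the circumstantial evidence from paths, trees and cycles that you summarise correctly. There is therefore nothing to compare your attempt against; the only question is whether your outline is, or leads to, a proof. It does not, and to your credit you say so yourself.

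Your heuristic case is sound: the cycle threshold $1+\tfrac{m-1}{m}|\G|$ is indeed maximised over groups of order $n$ at $m=n$, and $D(\mathbb{Z}_n)=n\ge D(\G)$ for every abelian $\G$ of order $n$. But the transport step contains a genuine gap. Your appeal to Lemma~\ref{small certificate} is valid in that only connected sets of size at most $n^2$ need to be checked, yet this does not ``open the door'' to a probabilistic choice of the weights $c_1,\dots,c_k$: the number of admissible weight tuples is bounded in terms of $n$ alone, whereas the number of small connected subsets grows with $|V(G)|$ and can be made arbitrarily large. A union bound over a probability space of fixed size against unboundedly many bad events cannot succeed. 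Randomising the integer lift $\tilde\ell_i(v)$ vertex by vertex does give randomness of the right order, but then the bad events (one per small connected $A$) overlap heavily, and you offer no mechanism---Lov\'asz Local Lemma or otherwise---to control them; the carry problem you identify is exactly the statement that these events refuse to decouple.

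The inductive route through a maximal subgroup is likewise only a hope. Lemma~\ref{separator approach} manufactures zero-avoiding labelings \emph{from} separators, but it does not run in reverse, and it is precisely the converse direction you would need to push an inductive comparison through the extension $\G'\le\G$. In short, you have located the natural contrapositive and named the right obstacles, but neither branch of your plan yet contains the idea that would overcome them; the conjecture remains open.
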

	
	\end{section}
 		
	\bibliographystyle{plain}
\bibliography{zsumbib}

 \end{document}